\documentclass[10pt,pdftex]{article}
\usepackage[a4paper,top=2cm,bottom=2.5cm,left=2cm,right=2cm,includeheadfoot]{geometry}
\usepackage{amsmath}
\usepackage{amsfonts}
\usepackage{overpic}
\usepackage{amsthm}
\usepackage{amssymb}
\usepackage{mathrsfs}

\usepackage{subfigure}

\usepackage{xcolor}

\usepackage{lipsum}

\newtheorem{theorem}{Theorem}
\newtheorem{remark}{Remark}

\begin{document}
\title{Phase-field modeling and effective simulation of non-isothermal reactive transport}

\author{Carina Bringedal\footnote{Department of Computer science, Electrical engineering and Mathematical sciences, Western Norway University of Applied Sciences, and Institute for Modelling Hydraulic and Environmental Systems, University of Stuttgart. carina.bringedal@hvl.no}, Alexander Jaust\footnote{Institute for Parallel and Distributed Systems, University of Stuttgart}}
\date{}
\maketitle

\begin{abstract}
	We consider single-phase flow with solute transport where ions in the fluid can precipitate and form a mineral, and where the mineral can dissolve and release solute into the fluid. Such a setting includes an evolving interface between fluid and mineral.  We approximate the evolving interface with a diffuse interface, which is modeled with an Allen-Cahn equation. We also include effects from temperature such that the reaction rate can depend on temperature, and allow heat conduction through fluid and mineral.
	As Allen-Cahn is generally not conservative due to curvature-driven motion, we include a reformulation that is conservative. This reformulation includes a non-local term which makes the use of standard Newton iterations for solving the resulting non-linear system of equations very slow. We instead apply L-scheme iterations, which can be proven to converge for any starting guess, although giving only linear convergence. The three coupled equations for diffuse interface, solute transport and heat transport are solved via an iterative coupling scheme. This allows the three equations to be solved more efficiently compared to a monolithic scheme, and only few iterations are needed for high accuracy. Through numerical experiments we highlight the usefulness and efficiency of the suggested numerical scheme and the applicability of the resulting model.
\end{abstract}

\section{Introduction}
Reactive transport is relevant for many environmental and technical applications. We are here especially interested in reactive transport with heterogeneous reactions, where solutes can leave a liquid phase and become part of a solid phase, or the other way around. This is a typical feature of mineral precipitation and dissolution \cite{reactivereview}, which is common in groundwater remediation, soil salinization, oil production, metallurgy and geothermal energy production, to mention some. When temperature plays a role, which is in particular the case with geothermal energy production, the interplay between temperature-dependent solubilities of minerals and reaction rates depending on temperature can give an interplay between varying temperature and solute concentrations \cite{clauser2003numerical}. For many of these applications, experiments are unfeasible, which rather turns the attention to having reliable models and numerical simulations to investigate these processes. However, the close coupling between transport of solutes, heat and chemical reactions can be challenging to model and simulate\cite{xu1999modeling}.

At the interface between fluid and mineral, solutes in the fluid can leave the liquid fluid phase and rather become part of the solid mineral phase. The transition of solutes from fluid to mineral corresponds to an evolving fluid-mineral interface and hence a free boundary of the model. Hence, where the fluid is and where the mineral is, pose additional challenges to modeling of such reactive transport processes. 

There are several modeling approaches to describe these reactive transport processes. One possibility is to use sharp-interface models, which includes to track the interface between the fluid and mineral and hence the fluid and mineral domains themselves, and at the same time solve the necessary conservation equations for mass and energy in the respective, time-evolving domains. The two domains are coupled to each other through boundary conditions on the evolving fluid-mineral interface, where these boundary conditions account for the exchange of mass and energy between the domains. To model the evolution of the interface one can in the general case use a level-set equation \cite{osher2005level}, for applications of level-set models with reactive transport see e.g.~\cite{bringedal2016upscaling,ray2019numerical,li2008levelset}. For simpler geometries, e.g.~layering or circular shapes, a simple distance or radius function can be used \cite{Bringedal2015,noorden2009irc,vannoorden2009strip}. 
Although correct and accurate, these sharp-interface approaches lead to singularities in the formulation in case of merging or break-up of two minerals, and can be numerically challenging due to the need of tracking the interface and divide the computational domains accordingly. Also, the interface itself appears as a shock in the domain, with discontinuities across it, leading to further numerical difficulties. We instead apply a diffuse-interface approach through a phase-field model.

In a diffuse-interface model \cite{anderson1998diffuse}, the interface between fluid and mineral is modeled as a diffuse transition zone. Instead of having a sharp interface with boundary conditions and different model equations and material properties at each side the evolving interface, the model is rewritten to use the same model equations at the full domain, where a diffuse transition region represents the processes at the evolving interface. This diffuse transition region hence represents the fluid-mineral interface and should account for the exchange of mass and energy occurring here, but is modeled as smooth. Hence, a diffuse-interface model does not involve any shocks nor discontinuities as the transition between fluid and mineral is smoothed out over some region of non-zero width. The evolution of the fluid-mineral interface is modeled by an evolution of the location of the diffuse transition region, through a phase-field equation. However, note that the introduction of the diffuse transition region is a mathematical simplification and does not represent physics on the molecular level (as the real interface width would be at a much smaller scale \cite{jacqmin2000contact}). Hence, the diffuse-interface model should be seen as an approximation to the real sharp-interface physics, introduced for convenient mathematical and numerical treatment. If the diffuse-interface model correctly approximates the sharp-interface physics; that is, the diffuse-interface model can recover the sharp-interface model as the diffuse interface width approaches zero, we consider the diffuse-interface model a valid approximation. This can be shown analytically by matched asymptotic expansions \cite{caginalp1988dynamics}, or compared numerically for test cases \cite{kelm2022comparison,xu2012compare,bringedal2019phase}. The diffuse-interface model offers advantages of avoiding singularities in case of merging and break-up, and is generally easier to discretize and solve numerically as it is formulated on a fixed domain without any shocks and discontinuities due to the smooth, diffuse nature. However, these advantages come at the cost of applying a model that is an approximation of the sharp-interface physics, which can lead to unwanted evolution of the diffuse transition region.

There are two main approaches for phase-field models describing evolving interfaces: Allen-Cahn \cite{allen1979cahn} and Cahn-Hilliard \cite{cahn1958hilliard}. Where the Allen-Cahn model is represented by a second-order non-linear partial differential equation (PDE), the Cahn-Hilliard model is a fourth-order non-linear PDE. Due to being second-order, the Allen-Cahn equation can be numerically discretized with most standard discretization schemes, and fulfills a maximum principle. However, the Allen-Cahn model includes curvature-driven motion of the interface \cite{allen1979cahn}, which in its original form is not conservative. That means, an Allen-Cahn phase field would evolve in a manner that corresponds to loss or gain of mineral that cannot be connected to mineral precipitation or dissolution reactions. On the other hand, the Cahn-Hilliard equation needs special care due to being fourth order and can have overshoot/undershoot of values, which need to be handled to stay within the physical bounds \cite{frank2020bound}. The Cahn-Hilliard model includes Mullins-Sekerka motion \cite{stoth1996mullins}, which corresponds to a redistribution of the mineral to minimize surface area. However, this redistribution is conservative, which means that any net loss or gain of mineral is always connected to a mineral precipitation or dissolution reaction. Both approaches have been successfully used to model mineral precipitation and dissolution, coupled with fluid flow and solute transport \cite{bringedal2019phase,lars2021ternary}.

We here apply the Allen-Cahn equation due to its beneficial numerical properties and availability of maximum principle, and rather explore a reformulation of it to make the curvature-driven motion conservative. We build on the Allen-Cahn model found in \cite{bringedal2019phase}. Rubinstein and Sternberg \cite{rubinstein1992nonlocal} suggested to add a non-local term to overcome the issue of non-conservative motion. Then, the curvature-driven motion instead redistributes the mineral to obtain mean curvature \cite{chen2010cons}. This reformulation has been successfully applied in several contexts \cite{chen2010cons,bringedal2019FVCA,peszynska2021reduced,frank2018mass}. Although conservative, the non-local term introduces numerical challenges in terms of solving the resulting non-linear system of equations if implicit time stepping is used. If applying Newton's method, the non-local term causes the Jacobian matrix to be full, which makes the Newton iterations expensive \cite{bringedal2019FVCA}. 

To avoid expensive Newton iterations caused by the non-local term of the conservative Allen-Cahn equation, we instead apply L-scheme iterations to solve the resulting non-linear system of equations \cite{pop2004lscheme}. Using L-scheme iterations relies on having a monotonically decreasing (or increasing) non-linearity, which is generally not the case for the conservative Allen-Cahn equation. Therefore we split the non-linearity in a decreasing and increasing part, which is solved implicitly and explicitly, respectively. By adding a stabilization parameter, we can guarantee that the L-scheme iterations form a contraction and always converge \cite{kumar2014convergence}. The L-scheme iterations are found to converge when the stabilization parameter is large enough, and converge independently of the time-step size and initial guess. L-scheme iterations however only offer linear convergence, and can require many iterations depending on the choice of stabilization parameter and time-step size. For other equations approaches with using modified L-scheme iterations or switching to Newton when a good starting point is found, have been suggested \cite{BringedalBastidas3,mitra2019lscheme,list2016iterative}. However, it is important to emphasize that for L-scheme iterations, every iteration step is much cheaper compared to Newton iterations.

To discretize the model equations, we apply the Finite Volume method \cite{eymard2000finite}. Finite Volume method is discretely conservative and the conservative Allen-Cahn equation remains discretely conservative when a suitable splitting of the non-linearity into implicit and explicit parts are made \cite{bringedal2019FVCA}.

To be able to model non-isothermal reactive transport, the Allen-Cahn equation must be coupled to conservation equations for mass and energy. As discussed above, these conservation equations are in this case defined on fixed domains and hence describe the conservation of mass and energy in the combined domain of both fluid and mineral. Hence, the conservation equations are modified to incorporate the diffuse transition that represents the mass and energy transfer between fluid and mineral. For this, we build on the isothermal model in \cite{bringedal2019phase} and extend it to non-isothermal conditions. We show how the conservation property of these equations are modified by coupling to the (conservative) Allen-Cahn equation. 

To solve the coupled system of conservative Allen-Cahn equation, conservation of mass and energy, we apply an iterative strategy where the different equations are solved sequentially at each time step, and iterate until convergence is achieved. This is done for a simpler case without fluid flow only. The advantage of using an iterative strategy is that each equation can be discretized according to its needs. In particular, as the conservation equations for mass and energy are linear (when the phase field is known), simpler strategies can be used to solve them. By introducing an additional stabilization parameter for the coupling, we can prove the convergence of the scheme with a restriction on the time-step size. The strategy and convergence proof build on the ideas in \cite{BringedalBastidas3,brun2020iterative}. Numerical simulations show that the resulting scheme performs well, requiring generally few coupling iterations to reach convergence. For larger time-step sizes, the coupling iterations still converge but the efficiency of the scheme is limited by the L-scheme iterations then requiring many iterations to reach convergence. 

The goal of this paper is hence to formulate a conservative phase-field model including an evolving fluid-mineral interface that can model fluid flow, solute transport and heat transport. Given the numerical challenges coming from such a highly non-linear and strongly coupled model, we aim to formulate an efficient numerical scheme based on L-scheme iterations for the non-linearities and coupling iterations for the coupled system of equations.

To reach our goals, the remaining of this paper is structured as following. In Section \ref{sec:sharp} we consider the sharp-interface model that form the basis of the physical processes that we want to model. The corresponding diffuse-interface model is formulated and analyzed in terms of conservation in Section \ref{sec:diffuse}. In Section \ref{sec:limit} we show that the sharp-interface limit of the diffuse-interface model is indeed the model from Section \ref{sec:sharp}, with the additional curvature-driven motion. In Section \ref{sec:numerics} we formulate the numerical strategy and show the discrete conservation as well as prove convergence of the two iterative schemes. Numerical behavior of the Allen-Cahn equation and the coupled phase-field model is investigated in Section \ref{sec:numbehavior}. Discussion and conclusion can be found in Section \ref{sec:conclusion}.

\section{Sharp-interface model}\label{sec:sharp}
We here present the sharp-interface model for fluid flow, solute conservation and energy conservation in the fluid and mineral. The model is similar to the model considered in \cite{bringedal2016upscaling,vannoorden2009strip}. 
Two domains are included: The fluid domain where fluid flow and the dissolved solute is found, and the mineral domain where the solute appears as a stationary mineral. Both domains are evolving in time as mineral dissolves becoming solute part of the fluid, and as solute from the fluid precipitates becoming part of the mineral. We will denote the fluid and mineral domains $\Omega_f(t)$ and $\Omega_m(t)$. The two domains are separated by the interface $\Gamma(t)$. The considered chemical reaction can be written as 
\begin{equation*}
n_1 C_1 + n_2 C_2 \rightleftharpoons C;
\end{equation*}
that is, that $n_1$ ions of type $C_1$ go together with $n_2$ ions of type $C_2$ to form one minerals molecule of type $C$. The reaction can go both ways, depending on the ion concentrations relative to the minerals solubility.

\subsection{Fluid domain}
In the fluid domain, we consider a fluid having density $\rho_f$ and which is flowing with velocity $\mathbf v$. Dissolved in the fluid, two solutes having concentration $c_1$ and $c_2$ (corresponding to the two ion types $C_1$ and $C_2$, respectively) are transported. Temperature variations are accounted for, and the temperature of the fluid is denoted $T_f$. Since we will work with conservation of both fluid and solutes, it is convenient to also consider the molar concentration of the fluid $m_f$. Note however that $m_f$ denotes the molar concentration of the whole fluid phase; hence it accounts for both solvent (water) and solutes. 
The molar concentration and the density of the fluid is connected through $\rho_f = m_f M$, where $M_f$ is the (average) molar mass of the fluid. We assume $M_f$ as well as $\rho_f$ to be constant, which is a valid approximation when the variability of $c_1$ and $c_2$ is not too large compared to $m_f$, or when the solvent and solutes have comparable molar masses. This means that we can still model the fluid with constant density.

The model equations for the considered processes in the fluid domain $\Omega_f(t)$ are:
\begin{subequations}\label{eq:sharpfluid}
\begin{align}
\nabla\cdot\mathbf v &= 0\label{eq:sharpmass}\\
\partial_t(\rho_f \mathbf v) + \nabla\cdot(\rho_f\mathbf v\otimes\mathbf v) &= -\nabla p + \nabla\cdot\boldsymbol{\sigma} + \rho_f\mathbf g \label{eq:sharpns} \\
\partial_t c_i + \nabla\cdot(c_i\mathbf v) &= D_i\nabla^2 c_i \quad i=1,2 \label{eq:sharpion}\\
\partial_t(c_{p,f}\rho_f T_f) + \nabla\cdot(c_{p,f}\rho_fT_f\mathbf v) &= k_f\nabla^2T_f \label{eq:sharpenergy}
\end{align}
\end{subequations}
Here, since the fluid is incompressible, the stress tensor is $\boldsymbol{\sigma} = \mu(T_f)\nabla\mathbf v$, where
the viscosity $\mu$ is allowed to vary with fluid temperature. We account for a body force $\rho_f\mathbf g$, e.g. gravity. Further, $D_i$ is the diffusivity of the solutes and we will assume $D_1=D_2=D$ and that it is constant. The heat capacity $c_{p,f}$ and heat conductivity $k_f$ are also assumed constant. 
Note that the first equation (\ref{eq:sharpmass}) is the mass conservation equation for the incompressible fluid and is originally $\partial_t m_f + \nabla\cdot(m_f\mathbf v) = 0$, but is simplified to (\ref{eq:sharpmass}) as $m_f$ is constant. The second equation (\ref{eq:sharpns}) is the conservation of linear momentum, while the third equation (\ref{eq:sharpion}) is the mass conservation equation of the solute. Finally, (\ref{eq:sharpenergy}) expresses the conservation of energy inside the fluid. Note that we use a rather simple energy equation by letting temperature be the primary variable, but this is a valid assumption as long we consider low fluid velocities and no evaporation of the fluid \cite{landau1987fluid}.

\subsection{Mineral domain}\label{sec:sharpmineral}
The mineral domain consists of a stationary mineral having molar concentration $m_m$ and mass density $\rho_m$. These two are connected through $\rho_m = m_m M_m$, and are all considered constant. The mineral is a solid, hence flow cannot take place. The mineral can however conduct heat, hence an energy conservation equation is considered in the mineral domain $\Omega_m(t)$:
\begin{subequations}\label{eq:sharpmineral}
\begin{align}
\mathbf v &= \mathbf 0 \label{eq:sharpmineralflow}\\
\partial_t(c_{p,m}\rho_mT_m) &= k_m\nabla^2T_m, \label{eq:sharpmineralenergy}
\end{align}
\end{subequations}
where $T_m$ is the temperature in the mineral, and $c_{p,m}$ and $k_m$ are the (constant) heat capacity and heat conductivity of the mineral, respectively. Here, (\ref{eq:sharpmineralflow}) expresses conservation of mass in the mineral, while (\ref{eq:sharpmineralenergy}) represents conservation of energy in the mineral.

\subsection{Boundary conditions between the two domains}
The interface $\Gamma(t)$ between the fluid and mineral domains is evolving due to the mineral precipitation and dissolution reaction. When minerals dissolve into ions, the ions become part of the fluid domain as solute. Hence, there is a transfer of ions across the interface, which also corresponds to a mass transfer across the interface. The interface obtains a new location due to the lost minerals. The opposite takes place due to precipitation.
To ensure conservation of mass, ions and energy across the interface, we apply Rankine-Hugoniot jump conditions \cite{fasano2008RH}, which balances the jump in the flux with the jump of the conserved quantity across an evolving interface. For momentum we assume a no-slip condition; that is, that the tangential component of the fluid velocity is zero at the interface. The normal component of the fluid velocity will in general not be zero, but be proportional to the normal velocity of the evolving interface. Hence, on $\Gamma(t)$ we have:
\begin{subequations}\label{eq:sharpbc}
\begin{align}
-m_f\mathbf v\cdot\mathbf n &= v_n((n_1+n_2)m_m-m_f), \label{eq:sharpbcmass}\\
\mathbf v &= v_n\frac{m_f-(n_1+n_2)m_m}{m_f}\mathbf n, \label{eq:sharpbcmom} \\
(-c_i\mathbf v+D\nabla c_i)\cdot\mathbf n &= v_n(n_im_m-c_i) \quad i=1,2, \label{eq:sharpbcsol}\\
(-k_m\nabla T_m - c_{p,f}\rho_f T_f\mathbf v + k_f\nabla T_f)\cdot\mathbf n &= v_n(c_{p,m}\rho_m T_m - c_{p,f}\rho_f T_f). \label{eq:sharpbcT}
\end{align}
\end{subequations}
The normal vector $\mathbf n$ is assumed to point into the mineral, and the normal velocity $v_n$ is positive when $\Gamma(t)$ moves towards the mineral domain, which corresponds to dissolution taking place.
The top boundary condition (\ref{eq:sharpbcmass}) considers conservation of mass, from a molar perspective. It expresses that one mineral molecule from the mineral domain dissolves into $n_1+n_2$ ions in the fluid domain. This jump in mass is balanced by the jump in the mass flux, which comprises of the advective term from the mass conservation equation. The second boundary condition (\ref{eq:sharpbcmom}) is a reformulation of the no-slip condition and states that the normal component of the fluid velocity from (\ref{eq:sharpbcmass}) is the only component. The third boundary condition (\ref{eq:sharpbcsol}) ensures conservation of the two solutes: As one mineral molecule forms $n_i$ ion molecules, the factor $n_i$ appears. The jump in solute is balanced by the jump in the solute flux. As the mineral is stationary, only the contribution from the fluid domain is needed. These boundary conditions (\ref{eq:sharpbcmass})-(\ref{eq:sharpbcsol}) are the same as in \cite{vannoorden2009strip}. Finally, the last boundary condition (\ref{eq:sharpbcT}) accounts for the energy conservation across the evolving interface. In this equation the jump in energy is balanced by the jump in the energy flux, which has contributions from both mineral and fluid. This boundary condition is also found in \cite{Bringedal2015}.

\subsubsection{Reaction rates for mineral precipitation and dissolution}
For the mineral precipitation/dissolution reaction, we formulate a reaction rate. Following \cite{knabner1995compatible,vannoorden2009strip} and including temperature dependence as in \cite{bringedal2016upscaling} we get
\begin{equation}
m_m v_n
= (-f_p(T_f,c_1,c_2) - f_d(T_f,c_1,c_2,\mathbf x)), \quad \mathbf x\in\Gamma(t),
\label{eq:fsharp}
\end{equation} 
where $f_p$ and $f_d$ are the precipitation and dissolution rates, respectively. The rates are \cite{Chang,knabner1995compatible}
\begin{subequations}\label{eq:fpfd}
\begin{align}
f_p(T_f,c_1,c_2) &= k_0 e^{-\frac{E}{RT_f}}\frac{c_1^{n_1}c_2^{n_2}}{K_m(T_f)} \text{ and } \\ f_d(T_f,c_1,c_2,\mathbf x) &= k_0 e^{-\frac{E}{RT_f}} w(\text{dist}(\mathbf x,\mathcal G),T_f,c_1,c_2) \label{eq:fd}
\end{align}
\end{subequations}
where $k_0$ is a positive rate constant, $E$ is the activation energy, $R$ the gas constant and $K_m(T_f)$ is the solubility product for the mineral. As dissolution can only take place when there is still mineral present, we introduce a distance measure that measures the distance from $\Gamma(t)$ to the closest non-reactive solid or an external boundary, in (\ref{eq:fd}) denoted $\mathcal G$. When there is no mineral present at a specific location, this distance will be zero. To incorporate zero dissolution rate when there is no mineral present, one can use a modified Heaviside graph \cite{knabner1995compatible}
\begin{equation*}
w(d,T_f,c_1,c_2) = \left\{ \begin{array}{lr} 0 & \text{if } d<0,\\ \text{min}\{\frac{c_1^{n_1}c_1^{n_2}}{K_m(T_f)},1\} & \text {if } d=0, \\
	1 & \text{if } d>0,
	\end{array}
	\right.
\end{equation*}
The temperature can both affect the solubility of the mineral and also the general speed of the reaction rates.

\subsubsection{Volume changes due to mineral precipitation and dissolution}\label{sec:massbc}
The boundary condition ensuring conservation of mass (\ref{eq:sharpbcmass}) and the modified no-slip condition (\ref{eq:sharpbcmom}) also expresses any volume expansion occurring due to the chemical reaction. Letting $K_m=\frac{m_f-(n_1+n_2)m_m}{m_f}$, the boundary condition (\ref{eq:sharpbcmom}) on $\Gamma(t)$ reads
\begin{equation*}
\mathbf v= K_m v_n\mathbf n.
\end{equation*}
Hence, the fluid near the evolving interface is moving proportionally with the normal velocity of the evolving interface, and is scaled with the factor $K_m$. This is caused by the fluid having to move due to expansions/contractions caused by the chemical reaction. 

If $K_m=0$, then $m_f=(n_1+n_2)m_m$ which corresponds to the volume taken up by one mineral molecule being the same as the volume taken up by $n_1+n_2$ solute molecules. In this case, we have the more commonly used condition $\mathbf v=\mathbf 0$ on the interface. As the resulting velocity at the interface is generally small compared to flow velocities, it is usually a good approximation to let $K_m=0$ even if it is non-zero \cite{li2008levelset}. 

If $K_m>0$, the fluid density is larger which correspond to that the mineral phase takes up a larger volume compared to the fluid. This means, for example in the case of dissolution and $v_n>0$, that the fluid has to move towards the mineral to account for the volume change. Hence the fluid flows towards the mineral with the velocity $\mathbf v= K_m v_n\mathbf n$, which is positive in the direction of the normal vector pointing into the mineral.
Oppositely, if $K_m<0$, the fluid density is lower, hence the mineral phase takes up less volume compared to the fluid. In the case of dissolution and $v_n>0$, the fluid has to move away from the mineral as it is pushed away by the released solute needing more room. The fluid flows away from the mineral with the velocity $\mathbf v= K_m v_n\mathbf n$, which is now negative in the direction of the normal vector. Correspondingly, the opposite processes occurs when $v_n<0$ as precipitation takes place.

The volume changes coming from $K_m\neq 0$ have to be accounted for when choosing external boundary conditions for the flow, as e.g.~a closed container would lead to inconsistencies if there is overall net precipitation/dissolution inside the container. Instead the fluid must be allowed to escape/enter the domain in accordance with the mineral precipitation/dissolution. Hence, the necessary condition is
\begin{equation}
\int_{\partial \Omega_f(t)}m_f\mathbf v\cdot\mathbf n ds = -\int_{\Omega_f(t)}\partial_tm_fdV - \int_{\Gamma(t)}m_fK_mv_nds, \label{eq:sharpmassbc}
\end{equation}
where the integral on the left-hand side is over the external boundary of the fluid domain (that is, not facing a mineral), while the second integral on the right-hand side is over all fluid-mineral interfaces found inside the domain. Since we have constant density ($\partial_tm_f=0$), the inflow/outflow across the exterior boundary has to match the volume changes at the mineral boundary. If there is no expansion due to the reactions ($K_m=0$), (\ref{eq:sharpmassbc}) reduces to a standard boundary condition for incompressible flow.

\subsection{Domain evolution}
We mention for completeness how to model the evolving interface $\Gamma(t)$ in the case of a sharp-interface model. The location of the evolving interface $\Gamma(t)$, and hence also the evolution of the two domains, can be determined through a level-set approach. Letting $S:[0,\infty)\times\Omega\to\mathbb R$, where $\Omega=\Omega_f(t)\cup\Omega_m(t)\cup\Gamma(t)$, the level-set fulfills
\begin{equation*}
S(t,\mathbf x) = \left\{ \begin{array}{lr} <0 & \text{ if } \mathbf x\in\Omega_f(t), \\ 0 & \text{ if } \mathbf x\in\Gamma(t), \\ >0 & \text{ if } \mathbf x\in\Omega_m(t).
\end{array}
\right.
\end{equation*}
The level-set evolves following the level-set equation
\begin{equation*}
\partial_tS+v_n|\nabla S| = 0 \quad \text{ for } \mathbf x\in \Omega.
\end{equation*}
Note that the level-set equation requires the normal velocity of the evolving interface to be defined in the entire domain $\Omega$. This can be solved by, for each point $\mathbf x\in\Omega$, picking the corresponding value of $v_n$ from the point of $\Gamma(t)$ that is closest to $\mathbf x$.

\begin{remark}
	We will, without loss of generality, in the remainder of this paper only consider a simplified chemical setting, namely the case where $C_1+C_2 \leftrightharpoons C$; hence, one molecule of each type of solute is needed to form one mineral molecule. We will also assume that initial and boundary conditions for the two solutes are the same. In this case, the two solutes will always have the same concentration, which we will denote $c$. Hence only one equation (\ref{eq:sharpion}) is needed for the solute, and only one boundary condition (\ref{eq:sharpbcsol}). The reaction rates (\ref{eq:fpfd}) are modified such that $c_1=c_2$ and $n_1=n_2=1$.
\end{remark}

\section{Diffuse-interface models}\label{sec:diffuse}
Instead of dealing with a sharp interface, we now consider the transition between fluid and mineral to be a smooth transition. In this case there are no discontinuities between the two domains and all state variables will vary smoothly between the domains. To describe this setting we use a phase field $\phi(t,\mathbf{x})$. We will consider a phase field approaching the value 1 in the fluid domain and 0 in the mineral domain, and with a smooth transition of width $O(\lambda)$ between the domains. Hence, the interface between the fluid and mineral can no longer be said to be at an exact location, although it is common to let the value $\phi=0.5$ represent the interface location for practical purposes, which we will also apply in Section \ref{sec:limit}. The parameter $\lambda$ quantifies the width of the diffuse interface, and the sharp-interface model is expected to be recovered as $\lambda\to 0$, which we will investigate in Section \ref{sec:limit}. 

\subsection{Phase-field equations}
An Allen-Cahn equation \cite{allen1979cahn} is adopted for the phase field. The standard Allen-Cahn equation is not conservative and contain curvature effects that may be non-physical. In this section we will consider a standard Allen-Cahn equation as well as a reformulation to ensure conservation. 

The standard Allen-Cahn equation for mineral precipitation and dissolution is 
\begin{align}
\partial_t\phi  = - \frac{1}{\lambda^2}\gamma P'(\phi) + \gamma\nabla^2\phi-f_{\text{diff}}(\phi,T_f,c), \label{eq:AC}
\end{align}
Here, $P(\phi)=8\phi(1-\phi^2)$ is the double-well potential, ensuring that the phase field attains values close to 0 and 1 for small $\lambda$. The parameter $\gamma$ is a interface diffusion parameter and its role is explained in detail in Section \ref{sec:limit}.

The reaction rate $f_{\text{diff}}$ is related to the reaction rate for the sharp-interface model (\ref{eq:fsharp}), but is reformulated to incorporate the diffuse interface. We let
\begin{equation*}
f_{\text{diff}}(\phi,T,c) = \frac{4}{\lambda}\phi(1-\phi)\frac{1}{m_m}\big(f_p(T,c)-f_d(T,c)\big).
\end{equation*}
The factors $\phi(1-\phi)$ ensures that the reaction is only non-zero in the diffuse transition zone where $\phi$ is neither 0 nor 1. The fraction $4/\lambda$ ensures correct scaling as will be shown in section \ref{sec:limit}. For this reaction rate the Heaviside graph is not needed as $\phi\equiv1$ if all mineral is dissolved and the resulting reaction rate is hence 0 in this case. Hence, the precipitation $f_p$ and dissolution $f_d$ rates will be same as in (\ref{eq:fpfd}), but where $w\equiv1$ making the distance-measure superfluous. For convenience we will denote the net precipitation rate for $f(T,c) = f_p(T,c)-f_d(T,c)$ in the following.

As we will see in Section \ref{sec:conservation}, the Allen-Cahn equation is not conserving the phase-field variable. Inspired by \cite{chen2010cons,rubinstein1992nonlocal} we consider the following reformulation of the Allen-Cahn equation to overcome this:
	\begin{align}
	\partial_t\phi  = - \frac{1}{\lambda^2}\gamma P'(\phi) + \gamma\nabla^2\phi-f_{\text{diff}}(\phi,T,c) + \frac{\gamma}{\lambda^2}\frac{1}{|\Omega|}\int_{\Omega} P'(\phi)dV.
	\label{eq:rephase} 
	\end{align}
where $|\Omega|=\int_{\Omega}1dV$ is the measure of the domain $\Omega$. The reformulation (\ref{eq:rephase}) is conservative, which will be shown in Section \ref{sec:conservation}. We denote (\ref{eq:AC}) as the \emph{original} Allen-Cahn equation, and the reformulated (\ref{eq:rephase}) as the \emph{conservative} Allen-Cahn equations, respectively. The conservative reformulation (\ref{eq:rephase}) was recently also considered in \cite{bringedal2019FVCA}.

\subsection{Modified transport equations}
We now combine the conservation and transport equations from the fluid (\ref{eq:sharpfluid}), mineral (\ref{eq:sharpmineral}) and their separating boundary (\ref{eq:sharpbc}) to formulate a one-field model through incorporating the phase field. Letting $\phi$ be modeled by an Allen-Cahn equation, such that it approaches the value 1 in the fluid and 0 in the mineral, the following model equations describes the fluid flow and ensures conservation of mass, ions and energy in the combined fluid-mineral domain $\Omega$:

\begin{subequations}\label{eq:phase}
	\begin{align}
	\partial_t m_{\phi}+\nabla\cdot(m_f\phi\mathbf v) &= 0 \label{eq:phasemass} \\
	\partial_t(\rho_f\phi\mathbf v) + \nabla\cdot(\rho_f\phi\mathbf v\otimes\mathbf v) &= -\phi\nabla p + \nabla\cdot\boldsymbol{\sigma}_\phi - g(\phi,\lambda)\mathbf v + \frac{1}{2}\rho_f\mathbf v\partial_t\phi + \rho_f\mathbf g \label{eq:phasemom} \\
	\partial_t(\phi c+(1-\phi)m_m) +\nabla\cdot(c\phi\mathbf v) &= D\nabla(\phi\nabla c) \label{eq:phasesol} \\
	\partial_t((c_p\rho)_\phi T) + \nabla\cdot(c_{p,f}\rho_f T\phi\mathbf v) &= \nabla\cdot(k_\phi\nabla T) \label{eq:phaseT}
	\end{align}
\end{subequations}
Here, the newly introduced parameters $m_\phi$, $(c_p\rho)_\phi$ and $k_\phi$ are given by

\begin{equation*}
m_\phi = \phi m_f + (1-\phi) 2m_m, \quad (c_p\rho)_\phi = \phi c_{p,f}\rho_f+(1-\phi)c_{p,m}\rho_m, \quad k_\phi = \phi k_f + (1-\phi)k_m.
\end{equation*}
This way they attain the expected value in the fluid when $\phi=1$ and in the mineral when $\phi=0$. The factor two in front of $m_m$ is selected to ensure conservation of mass as $n_1=n_2=1$. The temperature $T$ describes the temperature of the entire system and hence replaces $T_f$ and $T_g$. The modified stress tensor is $\boldsymbol{\sigma}_\phi = \nu(\nabla\cdot(\phi\mathbf v))\mathbf I + \mu(\nabla(\phi\mathbf v)+(\nabla(\phi\mathbf v))^T)$. Hence, as the mixture of fluid and mineral is quasi-compressible due to the density differences, we here have a modified stress tensor, where $\nu$ is a bulk viscosity for the fluid-mineral mixture. 
The term $g(\phi,\lambda)\mathbf v$ is added to ensure $\mathbf v =0$ in the mineral \cite{bringedal2019phase}. The interpolation function $g(\phi,\lambda)$ is a decreasing, surjective and twice differentiable function fulfilling $g(1,\lambda)=0$ and $g(0,\lambda)>0$. This way, (\ref{eq:phasemom}) reduces to $\mathbf v=\mathbf 0$ when $\phi=0$. We use $g(\phi,\lambda)=\frac{K_{\mathbf v}}{\lambda}(1-\phi)$ for a constant $K_{\mathbf v}$, but mention that other possibilities are discussed in \cite{beckermann1999melt,bringedal2019phase,garcke2015shape}.

Note that, as pointed out already in Section \ref{sec:massbc}, we need to take care of which boundary conditions to use on $\partial\Omega$ for the mass conservation equation (\ref{eq:phasemass}). Directly letting $\phi\mathbf v\cdot\mathbf n=0$ on $\partial\Omega$ could lead to unphysical effects as the mass concentration $m_\phi$ can increase and decrease due to the chemical reaction (through changes in $\phi$). Hence, to let fluid escape or enter through $\partial\Omega$ accordingly, we need
\begin{equation}
\int_{\partial\Omega} m_{f}\phi\mathbf v\cdot\mathbf nds = -\int_{\Omega}\partial_tm_\phi dV.\label{eq:phasemassbc}
\end{equation} 
This condition is analog to (\ref{eq:sharpmassbc}). Note that if $m_f=2m_m$, then is $m_\phi=m_f$ and hence constant, which leads to the right-hand side of (\ref{eq:phasemassbc}) to be zero. This corresponds to the case $K_m=0$ as discussed in Section \ref{sec:massbc}.

\subsection{Conserved quantities of the phase-field models}\label{sec:conservation}
We here shortly show why the phase field described by the standard Allen-Cahn equation (\ref{eq:AC}) corresponds to a non-conserved order parameter. If we assume zero Neumann-conditions for the phase field on the external boundary $\partial\Omega$, we have
\begin{align*}
\frac{\text{d}}{\text{d} t} \int_{\Omega}\phi dV &= \int_{\Omega}\partial_t\phi dV = \int_{\Omega}\big(-\frac{1}{\lambda^2}\gamma P'(\phi)+\gamma\nabla^2\phi - \frac{4}{\lambda}\phi(1-\phi)\frac{1}{m_m}f(T,c)\big)dV \\
&= \int_{\Omega}\big(-\frac{1}{\lambda^2}\gamma P'(\phi)-\frac{4}{\lambda}\phi(1-\phi)\frac{1}{m_m}f(T,c)\big)dV + \int_{\partial\Omega}\gamma\nabla\phi\cdot\mathbf nds \\ &=\int_{\Omega}\big(-\frac{1}{\lambda^2}\gamma P'(\phi)-\frac{4}{\lambda}\phi(1-\phi)\frac{1}{m_m}f(T,c)\big)dV,
\end{align*}
which is non-zero also in the case of no chemical reactions. For the conservative Allen-Cahn (\ref{eq:rephase}), the phase field is conserved globally up to the chemical reaction, that is
\begin{align*}
\frac{\text{d}}{\text{d} t} \int_{\Omega}\phi dV &=\int_{\Omega}\big(-f_{\text{diff}}(\phi,T,c)\big)dV,
\end{align*}
hence we have $\frac{\text{d}}{\text{d} t}\int_{\Omega}\phi d\mathbf x = 0$ in the case of no chemical reactions. Note that since the integrated phase field gives the volume of the fluid part of the domain, this integral can be interpreted as the porosity of the system. This means that for the conservative Allen-Cahn equation (\ref{eq:rephase}) only the chemical reaction can affect the porosity. The scaling factor of $f_\text{diff}$, $\frac{4}{\lambda}\phi(1-\phi)$,  corresponds to the surface area of the fluid-mineral interface when integrated.

For total mass $m_\phi$, total amount of ions $\phi c+(1-\phi)m_m$ and total energy $(c_p\rho)_\phi T$ it is straightforward to show that these quantities are conserved under zero inflow ($\phi\mathbf v\cdot\mathbf n=0$) and zero Neumann ($\phi\nabla c\cdot\mathbf n=0$ and $k_\phi\nabla T\cdot\mathbf n=0$) boundary conditions on $\partial\Omega$, also in the case of chemical reactions. This is independent of which phase field is used to model the diffuse transition zone. If investigating conservation of $\phi c$, which corresponds to the amount of solute in the fluid, we get

\begin{align*}
\frac{\text{d}}{\text{d} t} \int_{\Omega}\phi c dV &= \int_{\Omega}\partial_t(\phi c )dV = m_m\int_{\Omega} \partial_t\phi dV,
\end{align*}
where the last integral is either connected to the reaction rate only, or to the reaction rate together with curvature effects (if using the original Allen-Cahn equation (\ref{eq:AC})). Hence, in the case of no chemical reaction, $\phi c$ is only conserved if using the conservative Allen-Cahn equation (\ref{eq:rephase}). In the case of chemical reactions, the change in $\phi c$ corresponds to the changes caused by the reaction rate.

\section{Sharp-interface limits of the diffuse-interface models}\label{sec:limit}
To justify that the considered phase-field model formulated in Section \ref{sec:diffuse} is meaningful, we consider the limit as the width of the diffuse interface $\lambda$ approaches zero to show that the model reduces to the expected sharp-interface model from Section \ref{sec:sharp}. 

We follow the ideas of \cite{caginalp1988dynamics} and separate between behavior far away from the interface and near it, and investigate the behavior as $\lambda\to 0$. We apply so-called outer expansions for the behavior of the phase-field model far away from the interface, and it will be shown that these outer expansions will reduce to the model equations in the fluid and mineral domain as $\lambda\to 0$. Behavior near the interface is described through so-called inner expansions of the unknowns and will be shown to capture the boundary conditions as $\lambda\to 0$. The two expansions are connected through matching conditions in a transition region where both are valid. We first present the two expansions and their matching conditions, before considering the outer and inner expansions of the phase-field models.

\subsection{Inner and outer expansions and their matching conditions}
Away from the interface, we consider the outer expansions of $\phi$, $c$, $p$, $\mathbf v$ and $T$. For $\phi$ this reads
\begin{equation}
\phi(t,\mathbf x) = \phi_0^{\text{out}}(t,\mathbf x) + \lambda\phi_1^{\text{out}}(t,\mathbf x) + \lambda^2\phi_2^{\text{out}}(t,\mathbf x) + \dots \label{eq:outer}
\end{equation}
and similarly for the other variables. Note that $\mu_f$ is considered (given) function of $T$ and hence do not need an expansion.

The inner expansions are valid near the interface, and it is hence convenient to make a change of coordinates into local, curvilinear coordinates. We let $\Gamma_\lambda(t)$ denote the set of points $\mathbf y_{\lambda}\in\Omega$ such that $\phi(t,\mathbf y_\lambda) = 1/2$. Note that the location of these points $\mathbf y_\lambda$ vary with time as the phase field evolves. 
Let $\mathbf s$ be an arc-length parametrization of the points on $\Gamma_\lambda(t)$ (note that $\mathbf s$ is a scalar if $\Omega\subset \mathbb R^2$ and consists of two coordinates if $\Omega\subset \mathbb R^3$) and $\mathbf n_\lambda$ the normal vector at $\Gamma_\lambda(t)$ pointing into the mineral (that is, towards decreasing $\phi$). Further, we let $r$ be the signed distance from a point $\mathbf x$ to the interface, being negative when inside the fluid ($\phi>0.5$) and positive inside the mineral ($\phi<0.5$). This way we can let each point $\mathbf x$ be described through
\begin{equation}
\mathbf x = \mathbf y_{\lambda}(t,\mathbf s) + r\mathbf n_\lambda(t,\mathbf s).\label{eq:coord}
\end{equation}
Through this transform, $(r,\mathbf s)$ makes up a new, orthogonal coordinate system. It is shown in \cite{caginalp1988dynamics} that 
\begin{equation*}
|\nabla r|=1,\ \ \ \nabla r\cdot\nabla s_i = 0, \ \ \ \partial_t r = -v_n, \ \ \ \nabla^2 r = \frac{\kappa+2\Pi r}{1+\kappa r+\Pi r^2},
\end{equation*}
where $\kappa$ and $\Pi$ are the mean and Gaussian curvature of the interface. We assume that $\mathbf y_\lambda$ and $\mathbf n_\lambda$ fulfill expansions $\mathbf y_\lambda = \mathbf y_0 + O(\lambda)$ and $\mathbf n_\lambda = \mathbf n_0 + O(\lambda)$ where $\mathbf y_0$ is a point of the interface $\Gamma_0^{\text{out}}(t)$ defined through $\phi_0^{\text{out}}=1/2$ and $\mathbf n_0$ being the normal vector of $\Gamma_0^{\text{out}}(t)$ pointing into the mineral. Then, with $z=r/\lambda$ we consider the inner expansions
\begin{equation}
\phi(t,\mathbf x) = \phi_0^{\text{in}}(t,z,\mathbf s) + \lambda\phi_1^{\text{in}}(t,z,\mathbf s) + \lambda^2\phi_2^{\text{in}}(t,z,\mathbf s) + \dots, \label{eq:inner}
\end{equation}
and similarly for the other unknowns. In the local, curvilinear coordinates and when accounting for the scaling of $z$, we need to rewrite derivatives. For a generic variable $u$ or $\mathbf u$, we obtain \cite{caginalp1988dynamics}
\begin{subequations}\label{eq:derivatives}
\begin{align}
\partial_t u &= -\lambda^{-1}v_{n,0}\partial_zu^{\text{in}} + (\partial_t+\partial_t s\cdot\nabla_{\mathbf s})u^{\text{in}} + O(\lambda), \\
\nabla_x u &= \lambda^{-1}\partial_z u^{\text{in}}\mathbf n_0 + \nabla_{\mathbf s}u^{\text{in}} + O(\lambda), \\
\nabla_x\cdot \mathbf u &= \lambda^{-1}\partial_z\mathbf u^{\text{in}}\cdot \mathbf n_0 + \nabla_{\mathbf s}\cdot\mathbf u^{\text{in}} + O(\lambda),\\
\nabla_x^2 v &= \lambda^{-2}\partial_{z}^2u + \lambda^{-1}\kappa_0\partial_zu + O(1), \label{eq:laplacemathching}
\end{align}
\end{subequations}
where we have used that $\nabla_x^2r=\kappa_0+O(\lambda)$ is the lowest order mean curvature and $v_n=v_{n,0}+O(\lambda)$. Here, $\kappa_0$ and $v_{n,0}$ are the curvature and normal velocity of the interface $\Gamma_0^{\text{out}}(t)$. In the last condition (\ref{eq:laplacemathching}) we have used the properties $|\nabla r|=1$ and $\nabla r\cdot \nabla s_i=0$.

The outer (\ref{eq:outer}) and inner (\ref{eq:inner}) expansions are tied together via (\ref{eq:coord}). For fixed $t$ and $\mathbf s$, we let $\mathbf y_{1/2\pm}$ denote the limit $r\to 0^+$ (i.e., from the mineral side) and $r\to 0^-$ (i.e., from the fluid side) of $\mathbf x$. 
We demand that when the outer expansions approach the boundary they need to match the inner expansions as they leave the boundary. That is, the limit when $r\to 0^{\pm}$ of the outer expansions need to match the limit when $z\to\pm\infty$ of the inner expansions. Using Taylor expansions, one can show that the following matching conditions
\begin{subequations}\label{eq:matching}
	\begin{align}
	\lim_{z\to\pm\infty}\phi_0^{\text{in}}(t,z,\mathbf s) &= \phi_0^{\text{out}}(t,\mathbf y_{1/2\pm}),\label{eq:match1} \\
	\lim_{z\to\pm\infty}\partial_z\phi_0^{\text{in}}(t,z,\mathbf s) &= 0, \label{eq:match2}\\
	\lim_{z\to\pm\infty}\partial_z\phi_1^{\text{in}}(t,z,\mathbf s) &= \nabla \phi_0^{\text{out}}(t,\mathbf y_{1/2\pm})\cdot\mathbf n_0, \label{eq:mathc3}
	\end{align}
\end{subequations}
are fulfilled, and similarly for the other variables \cite{caginalp1988dynamics}.

\subsection{Outer expansions}
We first apply the outer expansions (\ref{eq:outer}) to the phase-field equations (\ref{eq:AC}) and (\ref{eq:rephase}). Inserting the outer expansions in (\ref{eq:AC}), (\ref{eq:rephase}), the dominating term when $\lambda\to 0$ will in both cases be
\begin{equation*}
P'(\phi_0^{\text{out}}) = 0.
\end{equation*} 
Recalling that $P(\phi) = 8\phi^2(1-\phi)^2$, the above equation has the three solutions $\phi_0^{\text{out}}=0$, $\phi_0^{\text{out}}=0.5$ and $\phi_0^{\text{out}}=1$. Only $\phi_0^{\text{out}}=0$ and $\phi_0^{\text{out}}=1$ give stable solutions. We let $\Omega_{m,0}(t)$ and $\Omega_{f,0}(t)$ be the collection of points $\mathbf x\in\Omega$ associated with where $\phi_0^{\text{out}}=0$ and $\phi_0^{\text{out}}=1$, respectively.

For the remaining phase-field model equations (\ref{eq:phase}), we consider the two limit cases of $\phi_0^{\text{out}}=0$ and $\phi_0^{\text{out}}=1$ separately. When $\phi_0^{\text{out}}=0$ (that is, in $\Omega_{m,0}(t)$), the dominating terms of the outer expansions (\ref{eq:outer}) in (\ref{eq:phase}) give that
\begin{subequations}\label{eq:phasesharpmineral}
	\begin{align}
	\partial_tm_m &= 0, \\
	\mathbf v_0^{\text{out}} &=\mathbf{0}, \\
	\partial_tm_m &= 0, \\
	\partial_t(c_{p,m}\rho_m T_0^{\text{out}}) &= k_m\nabla^2 T_0^{\text{out}}.
	\end{align}
\end{subequations}
The first and third equations simply tell us that the mineral molar density is constant, which was also a model assumption, cf.~Section \ref{sec:sharpmineral}. The equations (\ref{eq:phasesharpmineral}) correspond to the ones formulated in (\ref{eq:sharpmineral}) for the mineral domain.
When $\phi_0^{\text{out}}=1$ (that is, in $\Omega_{f,0}(t)$), the dominating terms of the outer expansions (\ref{eq:outer}) in (\ref{eq:phase}) give that
\begin{subequations}\label{eq:phasesharpfluid}
	\begin{align}
	\partial_tm_f+\nabla\cdot(m_f\mathbf v_0^{\text{out}}) &= 0, \label{eq:phasesharpmass}\\
	\partial_t(\rho_f\mathbf v_0^{\text{out}}) + \nabla\cdot(\rho_f\mathbf v_0^{\text{out}}\otimes\mathbf v_0^{\text{out}}) &= -\nabla p_0^{\text{out}} + \nabla\cdot\boldsymbol{\sigma}_0 + \rho_f\mathbf g, \\
	\partial_t c_0^{\text{out}} + \nabla\cdot(c_0^{\text{out}}\mathbf v_0^{\text{out}}) &= D\nabla^2c_0^{\text{out}}, \\
	\partial_t(c_{p,f}\rho_fT_0^{\text{out}}) + \nabla\cdot(c_{p,f}\rho_fT_0^{\text{out}}) &= k_f\nabla^2T_0^{\text{out}},
	\end{align}
\end{subequations}
where $\boldsymbol{\sigma}_0 = \nu(\nabla\cdot\mathbf v_0^{\text{out}})\mathbf I + \mu(T_0^\text{out})(\nabla\mathbf v_0^{\text{out}} + (\nabla\mathbf v_0^{\text{out}})^T)$. Since $m_f$ is constant, (\ref{eq:phasesharpmass}) reduces to $\nabla\cdot\mathbf v_0^{\text{out}}=0$, which also means that we can simplify the stress tensor to $\boldsymbol{\sigma}_0 =  \mu(T_0^\text{out})\nabla\mathbf v_0^{\text{out}}$. Note that we have here assumed that $\mu(T)$ is Lipschitz-continuous. 
The equations (\ref{eq:phasesharpfluid}) hence correspond to the ones formulated in (\ref{eq:sharpfluid}) for the fluid domain.

\subsection{Inner expansions}
We now apply the inner expansions (\ref{eq:inner}) to the model equations (\ref{eq:AC}), (\ref{eq:rephase}) and (\ref{eq:phase}) to recover the boundary conditions at the evolving interface.

\subsubsection{Phase-field equations}\label{sec:phaselimit}
For the original Allen-Cahn equation (\ref{eq:AC}), the dominating $O(\lambda^{-2})$ terms when inserting the inner expansions (\ref{eq:inner}) (and keeping in mind the rescaled derivatives (\ref{eq:derivatives})) are
\begin{equation}
P'(\phi_0^{\text{in}}) =\partial_{z}^2\phi_0^{\text{in}}.\label{eq:phasein0}
\end{equation}
From the first matching condition (\ref{eq:match1}) we have that $\lim_{z\to\infty}\phi_0^{\text{in}} = 0$ and $\lim_{z\to\infty}\phi_0^{\text{in}} = 1$. By multiplying (\ref{eq:phasein0}) by $\partial_z\phi_0^{\text{in}}$, integrating twice with respect to $z$ and using the matching conditions and that $\phi_0^{\text{in}}(z=0) = 0.5$, we get that
\begin{equation}
\phi_0^{\text{in}}(t,z,\mathbf s) = \phi_0^{\text{in}}(z) = \frac{1}{1+e^{4z}} = \frac{1}{2}\tanh(2z) \label{eq:innerphi}
\end{equation}
fulfills the equation. For the $O(\lambda^{-1})$ terms one obtains 
\begin{equation*}
\gamma\big(P''(\phi_0^{\text{in}})-\partial_z^2\big)\phi_1^{\text{in}} = ( v_{n,0}+\gamma\kappa_0)\partial_z\phi_0^{\text{in}} - 4 \phi_0^{\text{in}}(1-\phi_0^{\text{in}})\frac{1}{m_m}f(T_0^{\text{in}},c_0^{\text{in}}).
\end{equation*}
We follow the same steps as in \cite{redeker2016phase} and view the left-hand side as an operator $\mathcal F$ depending on $\phi_0^{\text{in}}$ and applied to $\phi_1^{\text{in}}$. 
The operator $\mathcal F$ is a Fredholm operator of index zero, hence the above equation has a solution if and only if the right-hand side, denoted by  $A(\phi_0^{\text{in}})$, is orthogonal to the kernel of $\mathcal F$. It follows from \eqref{eq:phasein0} that $\partial_z\phi_0^{\text{in}}$ is in the kernel of $\mathcal F$. Since $v_{n,0}$, $\kappa_0$, $T_0^{\text{in}}$ and $c_0^{\text{in}}$ are independent of $z$ (the two latter will be shown in the following subsections), the solvability condition implies
\begin{align*}
0 &=\int_{-\infty}^{\infty}A(\phi_0^{\text{in}})\partial_z\phi_0^{\text{in}}dz \\
&= (v_{n,0} + \gamma\kappa_0)\int_{-\infty}^{\infty}(\partial_z\phi_0^{\text{in}})^2dz - 4  \frac{1}{m_m}f(T_0^{\text{in}},c_0^{\text{in}})\int_{-\infty}^{\infty}\phi_0^{\text{in}}(1-\phi_0^{\text{in}})\partial_z\phi_0^{\text{in}}dz \\
&= \frac{2}{3}(v_{n,0}+\gamma\kappa_0+ \frac{1}{m_m}f(T_0^{\text{in}},c_0^{\text{in}})).
\end{align*}
We apply the matching condition (\ref{eq:match1}) for $T$ and $c$ at the moving interface and obtain
\begin{equation*}
v_{n,0} = -\gamma\kappa_0 - \frac{1}{m_m}f(T_0^{\text{out}}(t,\mathbf y_{1/2-}),c_0^{\text{out}}(t,\mathbf y_{1/2-})),
\end{equation*}
which corresponds to the boundary condition \eqref{eq:fsharp} at the moving interface, with an additional curvature-driven motion, which is responsible for the Allen-Cahn equation not being conservative. Since the Allen-Cahn equation is derived from curvature-driven flow, it is expected that this term appears \cite{allen1979cahn}. Although the curvature effects would be zero if $\gamma=0$, this is not a valid choice for the Allen-Cahn equation (\ref{eq:AC}).

We address the sharp-interface limit of the conservative Allen-Cahn equation \eqref{eq:rephase}. We follow similar steps as \cite{chen2010cons} who investigated the sharp-interface limit for a conservative Allen-Cahn equation without chemical reactions. The equation \eqref{eq:rephase} is first split in two equations:
\begin{subequations}
	\begin{eqnarray}
	\partial_t\phi  = -\frac{1}{\lambda^2}\gamma P'(\phi) + \gamma\nabla^2\phi -f_{\text{diff}}(\phi,T,c) + \frac{1}{\lambda}\gamma\omega(t) \label{eq:rephase1}\\
	\omega(t) = \frac{1}{\lambda}\frac{1}{|\Omega|}\int_{\Omega}P'(\phi)dV. \label{eq:rexi}
	\end{eqnarray}
\end{subequations}
The average of $P'(\phi)$ appearing in \eqref{eq:rexi} is small and will decrease as $\lambda$ decreases. It is shown in \cite{chen2010cons} that $\omega(t) = O(\lambda^0)$ as $\lambda\to0$. 
The inner expansions (\ref{eq:inner}) of \eqref{eq:rephase1} are dominated by the terms
\begin{equation*}
P'(\phi_0^{\text{in}}) =  \partial_z^2 \phi_0^{\text{in}},
\end{equation*}
which has solution (\ref{eq:innerphi}). 
The next order terms are affected by the newly introduced term:
\begin{equation*}
\big(P''(\phi_0^{\text{in}})-\partial_z^2\big)\phi_1^{\text{in}} = (  v_{n,0}+\gamma\kappa_0)\partial_z\phi_0^{\text{in}} - 4 \phi_0^{\text{in}}(1-\phi_0^{\text{in}})\frac{1}{m_m}f(T_0^{\text{in}},c_0^{\text{in}}) + \gamma\omega_0^{\text{in}},
\end{equation*}
where $\omega_0^{\text{in}} =\frac{1}{\lambda}\frac{1}{|\Omega|}\int_{\Omega}P'(\phi_0^{\text{in}})dV$. 
As with the original Allen-Cahn equation we can use a solvability argument to proceed. The above equation only has a solution when
\begin{equation*}
\int_{-\infty}^{\infty} \Big((  v_{n,0}+\gamma\kappa_0)\partial_z\phi_0^{\text{in}} - 4 \phi_0^{\text{in}}(1-\phi_0^{\text{in}})\frac{1}{m_m}f(T_0^{\text{in}},c_0^{\text{in}}) + \gamma\omega_0^{\text{in}}\Big)\partial_z\phi_0^{\text{in}}dz=0.
\end{equation*}
Following same argumentation as with the original Allen-Cahn equation, and since $\omega_0^{\text{in}}$ is independent of $z$ by definition, we get
\begin{equation*}
v_{n,0} = -\gamma \kappa_0 - \frac{1}{m_m}f(T_0^{\text{in}},c_0^{\text{in}}) + 3\gamma\omega_0^{\text{in}}.
\end{equation*}
Finally, \cite{chen2010cons} shows that the condition of $\int_{\Omega}\partial_t\phi_0^{\text{in}}d\mathbf x=0$ is equivalent to
\begin{equation*}
\omega_0^{\text{in}} = \frac{1}{3}\frac{1}{|\Gamma(t)|}\int_{\Gamma(t)}\kappa_0ds=\frac{1}{3}\overline{\kappa}_0,
\end{equation*}
where $\overline{\kappa}$ is the average curvature along $\Gamma(t)$.
We hence arrive at
\begin{equation*}
v_{n,0} = -\gamma (\kappa_0-\overline{\kappa}_0) - \frac{1}{m_m}f(T_{0}^{\text{out}}(t,\mathbf y_{1/2-}),c_0^{\text{out}}(t,\mathbf y_{1/2-})) \quad \text{on } \Gamma(t),
\end{equation*}
This means that the interface velocity is still driven by both the chemical reaction and by curvature, but where the curvature-driven movement fulfills conservation of the phase-field parameter. The curvatured-driven motion redistributes the mineral towards constant curvature; towards bubbles \cite{schlogl1972chemical}. Hence, the fluid-solid interface moves due to the chemical reaction, and additionally in a conservative manner by redistributing mineral towards constant curvature.

\subsubsection{Mass conservation equation}
Inserting the inner expansions (\ref{eq:inner}) into the mass conservation equation (\ref{eq:phasemass}) gives the dominating $O(\lambda^{-1})$ term
\begin{equation*}
-v_{n,0}\partial_z m_\phi + \partial_z(m_f\phi_0^{\text{in}}\mathbf v_0^{\text{in}})\cdot\mathbf n_0 = 0.
\end{equation*}
Integrating with respect to $z$ from $-\infty$ to $+\infty$ and using matching conditions (\ref{eq:match1}) give
\begin{equation}
-m_f\mathbf v_0^{\text{in}}(t,\mathbf y_{1/2-})\cdot\mathbf n_0 = v_{n,0}(2m_m-m_f).\label{eq:phaselimmass}
\end{equation}
This boundary condition corresponds to (\ref{eq:sharpbcmass}).

\subsubsection{Momentum conservation equation}
The dominating $O(\lambda^{-2})$ terms arising from inserting the inner expansions (\ref{eq:inner}) into (\ref{eq:phasemom}) are 
\begin{equation*}
\mu\partial_{z}^2(\phi_0^{\text{in}}\mathbf v_0^{\text{in}}) + (\nu+\mu)\partial_{z}^2(\phi_0^{\text{in}}\mathbf v_0^{\text{in}}\cdot\mathbf n_0)\mathbf n_0 = 0
\end{equation*}
Note that $\mu$ and $\nu$ can be functions of $T_0^{\text{in}}$, which is still independent of $z$. 
We integrate twice with respect to $z$ and apply matching conditions (\ref{eq:match1}) and (\ref{eq:match2}) and hence arrive at
\begin{equation*}
\mu\mathbf v_0^{\text{in}}(t,\mathbf y_{1/2-}) = -(\nu+\mu)(\mathbf v_0^{\text{in}}(t,\mathbf y_{1/2-})\cdot\mathbf n_0)\mathbf n_0.
\end{equation*}
Here, the temperature $T_0^{\text{in}}$ inside $\mu$ and $\nu$ is also to be evaluated at $\mathbf y_{1/2-}$. 
Note that the current equation already gives us a no-slip condition: The velocity has only a normal component at the boundary, and hence no tangential component. 
Combining with (\ref{eq:phaselimmass}) we can write this condition as
\begin{equation*}
\mathbf v_0^{\text{in}}(t,\mathbf y_{1/2-}) = -\frac{\nu+\mu}{\mu}\frac{m_f-2m_m}{m_f}v_{n,0}\mathbf n_0.
\end{equation*}
However, the current condition is only consistent with (\ref{eq:phaselimmass}) when $\nu=-2\mu$. In this case,
\begin{equation*}
\mathbf v_0^{\text{in}}(t,\mathbf y_{1/2-}) = \frac{m_f-2m_m}{m_f}v_{n,0}\mathbf n_0,
\end{equation*}
which then corresponds to (\ref{eq:sharpbcmom}). Hence, this relation between the two viscosities $\nu$ and $\mu$ give consistency and the expected boundary condition in the sharp-interface limit. The second viscosity $\nu$ was here introduced since the fluid-mineral mixture is quasi-incompressible. A second viscosity is usually connected to how a fluid reacts on an fluid expansion/contraction and is here chosen in relation to $\mu$ to ensure that the chosen sharp-interface boundary condition is obtained.

\begin{remark}
	In case $m_f=2m_m$, we have no expansion/contraction due to the mineral precipitation and dissolution. In this case $m_\phi=m_f$ and (\ref{eq:phasemass}) would have expressed $\nabla\cdot(\phi\mathbf v)=0$ as $m_f$ is constant. Then, the second viscosity $\nu$ does not have to be introduced as (\ref{eq:phasemom}) could have used a simpler stress tensor. The sharp-interface limit in this case gives directly $\mathbf v_0^{\text{in}}(t,\mathbf y_{1/2-})=\mathbf 0$.
\end{remark}

\subsubsection{Solute conservation equation}
Inserting the inner expansions (\ref{eq:inner}) into the solute conservation equation (\ref{eq:phasesol}), the dominating $O(\lambda^{-2})$ term is
\begin{equation*}
\partial_z(\phi_0^{\text{in}}\partial_z c_0^{\text{in}}) = 0.
\end{equation*}
Integrating with respect to $z$ and using the matching condition (\ref{eq:match2}) together with the fact that $\phi_0^{\text{in}}>0$, we get that
\begin{equation*}
\partial_{z}c_0^{\text{in}}=0.
\end{equation*}
Hence, $c_0^{\text{in}}$ is independent of $z$. For the $O(\lambda^{-1})$ terms we obtain
\begin{equation*}
-v_{n,0}\partial_z\big(\phi_0^{\text{in}}c_0^{\text{in}}+(1-\phi_0^{\text{in}})m_m\big) + \partial_z(c_0^{\text{in}}\phi_0^{\text{in}}\mathbf v_0^{\text{in}})\cdot\mathbf n_0 = D\partial_z(\phi_0^{\text{in}}\partial_z c_1^{\text{in}}).
\end{equation*}
Integrating with respect to $z$ from $-\infty$ to $+\infty$ and using matching conditions (\ref{eq:matching}) we get
\begin{equation*}
v_{n,0}\big(m_m-c_0^{\text{out}}(t,\mathbf y_{1/2-})\big) = \big(-c_0^{\text{out}}(t,\mathbf y_{1/2-})\mathbf v_0^{\text{out}}(t,\mathbf y_{1/2-}) + D\nabla c_0^{\text{out}}(t,\mathbf y_{1/2-})\big)\cdot\mathbf n_0,
\end{equation*}
which corresponds to (\ref{eq:sharpbcsol}).

\subsubsection{Energy conservation equation}
By inserting the inner expansions (\ref{eq:inner}) into (\ref{eq:phaseT}), the dominating $O(\lambda^{-2})$ term is
\begin{equation*}
\partial_{z}^2 T_0^{\text{in}} = 0.
\end{equation*}
When integrating this equation with respect to $z$ and using the matching condition (\ref{eq:match2}), it becomes clear that
\begin{equation*}
\partial_z T_0^{\text{in}} = 0;
\end{equation*}
that is, $T_0^{\text{in}}$ is independent of $z$. The $O(\lambda^{-1})$ terms are
\begin{equation*}
-v_{n,0}\partial_z((c_{p}\rho)_{\phi^0}T_0^{\text{in}}) + \partial_z(c_f\rho_fT_0^{\text{in}}\phi_0^{\text{in}}\mathbf v_0^{\text{in}})\cdot\mathbf n_0 = \partial_z(k_{\phi^0}\partial_z T_1^{\text{in}}),
\end{equation*}
where the notation $(c_p\rho)_{\phi^0} = \phi_0^{\text{in}}c_{p,f}\rho_f + (1-\phi_0^{\text{in}})c_{p,m}\rho_m$ and similarly for $k_{\phi^0}$. We integrate with respect to $z$ from $-\infty$ to $+\infty$, taking advantage of that $T_0^{\text{in}}$ is independent of $z$. After applying matching conditions (\ref{eq:matching}) we get that
\begin{align*}
v_{n,0}&\big(c_{p,m}\rho_mT_0^{\text{out}}(t,\mathbf y_{1/2+})-c_{p,f}\rho_fT_0^{\text{out}}(t,\mathbf y_{1/2-})\big)= \big(-k_m\nabla T_0^{\text{out}}(t,\mathbf y_{1/2+})\\ &- c_{p,f}\rho_fT_0^{\text{out}}(t,\mathbf y_{1/2-})\mathbf v_0^{\text{out}}(t,\mathbf y_{1/2-}) + k_f\nabla T_0^{\text{out}}(t,\mathbf y_{1/2-})\big)\cdot\mathbf n_0,
\end{align*}
which corresponds to (\ref{eq:sharpbcT}) when associating the temperature at the positive (mineral) side with $T_m$ and the temperature at the negative (fluid) side with $T_f$.

\section{Numerical discretization of the phase-field model}\label{sec:numerics}
We here numerically discretize the resulting phase-field model. Special attention is given to the conservative Allen-Cahn equation to ensure that the numerical discretization honors the conservative property. We apply Finite Volume (FV) discretization in space, which is outlined in Section \ref{sec:FV}, and it is shown under which conditions the conservative Allen-Cahn equation (\ref{eq:rephase}) is discretely conservative in Section \ref{sec:discreteAC}. The conservative Allen-Cahn equation is non-linear and challenging to solve with Newton iterations due to the non-local term, and we formulate L-scheme iterations which converge to a conservative solution in Section \ref{sec:Lscheme}. The conservative Allen-Cahn equation is coupled to the other model equations in an iterative manner, which in a simplified setup is proven to converge in Section \ref{sec:coupmod}.

\subsection{Finite Volume discretization}\label{sec:FV}
For numerical discretization, we apply a cell-centered FV scheme on an admissible mesh $\mathcal E$ \cite{eymard2000finite}, where the phase-field variable (and similarly for temperature and solute) is approximated in the cell-center and we calculate fluxes across cell edges. The fluxes are for simplicity two-point approximations, but extensions using multi-point flux approximations are possible. By integrating the conservative Allen-Cahn equation (\ref{eq:rephase}) across one element and using Gauss' theorem on the diffusive term, the discrete FV approximation of (\ref{eq:rephase}) reads
\begin{align}
\frac{\phi_K^{n+1}-\phi_K^n}{\Delta t} + \frac{\gamma}{\lambda^2} P'(\phi_K^\ell) = &\frac{\gamma}{|K|}\sum_{L\in \mathcal N(K)} |\sigma_{K,L}| F^{\phi,\ell}_{K,L} - \frac{4}{\lambda} \phi_K^\ell(1-\phi_K^\ell)\frac{f(T_K^\ell,c_K^\ell)}{m_m}\nonumber\\ &+ \frac{\gamma}{\lambda^2}\frac{1}{|\Omega|}\sum_{J\in\mathcal E}|J|P'(\phi^\ell_J),\label{eq:discAC}
\end{align}
where $|K|$ is the measure of element $K$. We have here also discretized in time, and let superscript $n$ and $n+1$ denote the time step. The notation $\phi_K$ refers to the cell-centered value of $\phi$ in $K$ and is assumed to approximate the value of $\phi$ in entire $K$. Further, $\mathcal N(K)$ refers to the neighboring elements of $K$ and $|\sigma_{K,L}|$ is the measure of the edge $\sigma_{K,L}$ between element $K$ and the neighboring element $L$. The integral $\int_{\Omega}P'(\phi)d\mathbf x$ has been approximated by the sum $\sum_{J\in\mathcal E}|J|P'(\phi_J^\ell)$. The superscript $\ell$ denotes time step, and is $n$ or $n+1$ depending on whether forward or backward Euler is applied, respectively. The possible choices for time stepping will be addressed later. The fluxes $F_{K,L}^{\phi,\ell}$ approximate the diffusive flux $\nabla^2\phi$ on the edge between elements $K$ and $L$ and are given by
\begin{equation*}
F_{K,L}^{\phi,\ell} = \frac{\phi_L^\ell-\phi_K^\ell}{d_{K,L}},
\end{equation*}
where $d_{K,L}$ is the Euclidean distance between the center points $x_K\in K$ and $x_L\in L$. We have $F_{K,L}^{\phi,\ell} = -F_{L,K}^{\phi,\ell}$ on interior edges, which ensures the conservation between elements.

\subsection{Discrete conservation of the conservative Allen-Cahn equation}\label{sec:discreteAC}
We consider the global conservation of (\ref{eq:discAC})  up to the chemical reaction by summing up (\ref{eq:discAC}) for all $K\in\mathcal E$, hence across entire domain $\Omega$. Summing up over all $K\in\mathcal E$ and using Neumann boundary conditions for $\phi$ and that $F_{K,L}^{\phi,\ell} = -F_{L,K}^{\phi,\ell}$, we obtain
\begin{align*}
\sum_{K\in\mathcal E} \phi^{n+1}_K|K| = &\sum_{K\in\mathcal E} \phi^n_K|K| - \sum_{K\in\mathcal E}\frac{4}{\lambda} \phi_K^\ell(1-\phi_K^\ell)\frac{f(T_K^\ell,c_K^\ell)}{m_m}|K|\\ &- \frac{\gamma}{\lambda^2}\sum_{K\in\mathcal E}\big(|K|P'(\phi_K^\ell) - \frac{|K|}{|\Omega|}\sum_{J\in\mathcal E}|J|P'(\phi^\ell_J)\big)
\end{align*}
The summation $\sum_{K\in\mathcal E}\big(|K|P'(\phi_K^\ell) - \frac{|K|}{|\Omega|}\sum_{J\in\mathcal E}|J|P'(\phi^\ell_J)\big)$ adds up to zero under certain restrictions on the time evaluation. Denoting the sum $\sum_{J\in\mathcal E}|J|P'(\phi^\ell_J) := \Pi$, we have that the summation can be rewritten to
\begin{equation*}
\sum_{K\in\mathcal E}\big(|K|P'(\phi_K^\ell) - \frac{|K|}{|\Omega|}\sum_{J\in\mathcal E}|J|P'(\phi^\ell_J)\big) = \Pi - \sum_{K\in\mathcal E}\frac{|K|}{|\Omega|}\Pi.
\end{equation*}
As $|\Omega|$ and $\Pi$ are independent of the element, it is obvious that the terms sum up to zero as $\sum_{K\in\mathcal E}|K|=|\Omega|$ as the mesh is conforming. This remains true as long as the same choice of $\ell$ is taken for the evaluation of $P'(\phi^\ell_K)$ in both appearances for corresponding elements. This means that both forward Euler and backward Euler will be discretely conservative, as also shown in \cite{bringedal2019FVCA}. However, as also noted in \cite{bringedal2019FVCA}, an explicit discretization of the non-linear terms is unstable, while the implicit choice is slow to solve with Newton iterations as the Jacobian is full. In the following we suggest a strategy to overcome these difficulties, while still assuring the discrete conservation of (\ref{eq:rephase}).

\subsection{L-scheme iterations of a semi-implicit scheme}\label{sec:Lscheme}
We here consider a particular semi-implicit version of (\ref{eq:rephase}); and consider an alternative non-linear solver of the resulting non-linear system of equations. We will consider the L-scheme \cite{pop2004lscheme,list2016iterative}, which reformulates the discrete equation to iterations which form a contraction. To simplify the notation, we consider only the time-discrete equation. Initially we let the scheme be implicit, hence $\ell=n+1$, and we address how to solve
\begin{align}\label{eq:ACtime}
\frac{1}{\Delta t}(\phi^{n+1}-\phi^n) + \frac{\gamma}{\lambda^2}P'(\phi^{n+1}) = &\gamma \nabla^2\phi^{n+1} - \frac{4}{\lambda}\phi^{n+1}(1-\phi^{n+1})\frac{f(T,c)}{m_m}\nonumber\\ &+ \frac{\gamma}{\lambda ^2}\frac{1}{|\Omega|}\int_{\Omega}P'(\phi^{n+1})dV.
\end{align}
As we only address the non-linearity with respect to the phase field, we assume the temperature and concentration in the reactive term given. 
Note that the L-scheme approach is of course also applicable to solve the non-linearities in (\ref{eq:AC}), which has been done in \cite{BringedalBastidas3}, but we here focus on how to apply it to (\ref{eq:rephase}), and show how the L-scheme iterations can be assured to converge to the conservative solution.

\subsubsection{Convergence of the L-scheme}\label{sec:Lschemeconv}
The L-scheme adds an extra stabilization term to create a contraction, which converges by the Banach fixed point theorem. We need the non-linearity in the resulting discrete equation to be monotonically increasing or decreasing. However, the non-linearity in (\ref{eq:ACtime});
\begin{equation}
G(\phi,T,c) = -\frac{\gamma}{\lambda^2}P'(\phi) - \frac{4}{\lambda}\phi(1-\phi)\frac{f(T,c)}{m_m} + \frac{\gamma}{\lambda^2}\frac{1}{|\Omega|}\int_{\Omega}P'(\phi)dV,\label{eq:Fnonlin}
\end{equation}
is both increasing and decreasing with respect to $\phi$. Therefore we split the non-linearity $G(\phi,T,c)$ into an increasing and decreasing part, and solve the increasing part explicitly and decreasing implicitly. Note that we consider $T,c$ known and fixed in the following. Hence, we define
\begin{align*}
G_-(\phi,T,c) &= \int_0^\phi \min\{0,\partial_1G(\psi,T,c)\}d\psi, \\ G_+(\phi,T,c) &= \int_0^\phi \max\{0,\partial_1G(\psi,T,c)\}d\psi,
\end{align*}
and consider the time discretization
\begin{equation}\label{eq:ACtimepm}
\frac{1}{\Delta t}(\phi^{n+1}-\phi^n)  = \gamma \nabla^2\phi^{n+1} + G_-(\phi^{n+1},T,c) + G_+(\phi^n,T,c).
\end{equation}
The L-scheme iterations solve the non-linear equation (\ref{eq:ACtimepm}) iteratively through adding a stabilizing therm. By letting $j$ be the iteration index and setting $\phi^{n+1,0}:=\phi^n$, we solve
\begin{align}\label{eq:ACLscheme}
\frac{1}{\Delta t} (\phi^{n+1,j+1}-\phi^n) = &\gamma\nabla^2\phi^{n+1,j+1}+ G_-(\phi^{n+1,j},T,c)+G_+(\phi^n,T,c)\nonumber\\ &- \mathcal L(\phi^{n+1,j+1}-\phi^{n+1,j}),
\end{align}
where $\mathcal L\in\mathbb R_+$. 

\begin{theorem}\label{thm:Lscheme}
	The L-scheme iterations (\ref{eq:ACLscheme}) form a contraction and are guaranteed to converge independently of the initial guess and time-step size when $\mathcal L\geq M_G=\max_{0\leq\xi\leq 1}(-\partial_1G_-(\xi,T,c))$.
\end{theorem}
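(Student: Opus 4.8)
The plan is to show that the map $\Phi \colon \phi^{n+1,j} \mapsto \phi^{n+1,j+1}$ defined implicitly by \eqref{eq:ACLscheme} is a contraction on $L^2(\Omega)$, so that the Banach fixed point theorem yields a unique fixed point, which is necessarily the solution of \eqref{eq:ACtimepm}. Write $e^{j+1} := \phi^{n+1,j+1} - \phi^{n+1,j}$ for the iteration difference. Subtracting the equation \eqref{eq:ACLscheme} at iteration index $j$ from the one at index $j+1$, the terms $\phi^n$, $G_+(\phi^n,T,c)$ cancel, and one obtains
\begin{equation*}
\Big(\frac{1}{\Delta t}+\mathcal L\Big) e^{j+1} = \gamma\nabla^2 e^{j+1} + \big(G_-(\phi^{n+1,j},T,c)-G_-(\phi^{n+1,j-1},T,c)\big) + \mathcal L\, e^{j}.
\end{equation*}
First I would test this identity against $e^{j+1}$ and integrate over $\Omega$. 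The diffusion term contributes $-\gamma\|\nabla e^{j+1}\|^2 \le 0$ (using the homogeneous Neumann boundary condition on $\phi$), so it can be dropped from the upper bound. This leaves
\begin{equation*}
\Big(\frac{1}{\Delta t}+\mathcal L\Big)\|e^{j+1}\|^2 \le \int_\Omega \big(G_-(\phi^{n+1,j})-G_-(\phi^{n+1,j-1})\big)e^{j+1}\,dV + \mathcal L\int_\Omega e^{j} e^{j+1}\,dV.
\end{equation*}

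The crucial structural input is the monotonicity of $G_-$: by construction $G_-(\phi,T,c)=\int_0^\phi \min\{0,\partial_1 G(\psi,T,c)\}\,d\psi$, so $\partial_1 G_- \le 0$ pointwise, i.e. $G_-(\cdot,T,c)$ is non-increasing, and moreover it is Lipschitz with constant $M_G = \max_{0\le\xi\le1}(-\partial_1 G_-(\xi,T,c))$. Hence the mean-value theorem gives $G_-(\phi^{n+1,j})-G_-(\phi^{n+1,j-1}) = \theta(x)\,e^{j}$ with $-M_G \le \theta(x)\le 0$. Substituting and using $\theta \le 0$ to discard the (favorable) sign, then applying $|\theta|\le M_G$ together with Cauchy–Schwarz on each term, we reach
\begin{equation*}
\Big(\frac{1}{\Delta t}+\mathcal L\Big)\|e^{j+1}\|^2 \le (M_G+\mathcal L)\,\|e^{j}\|\,\|e^{j+1}\|,
\end{equation*}
so that $\|e^{j+1}\| \le q\,\|e^{j}\|$ with $q = \dfrac{M_G+\mathcal L}{\tfrac{1}{\Delta t}+\mathcal L}$. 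The choice $\mathcal L \ge M_G$ guarantees $q<1$ for any $\Delta t>0$ (in fact $q \le \tfrac{2\mathcal L}{1/\Delta t+\mathcal L}<1$ only needs $\mathcal L\ge M_G$ to kill the numerator's excess over the denominator's), so $\Phi$ is a contraction and the Banach fixed point theorem applies, giving convergence from any initial guess $\phi^{n+1,0}$ and for any time-step size. Finally, since the fixed point satisfies \eqref{eq:ACtimepm}, which together with the splitting reproduces the semi-implicit discretization, and since the discrete-conservation argument of Section \ref{sec:discreteAC} applies to the limit, the iterations converge to the conservative solution.

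One subtlety deserves care and is the main obstacle: the non-local term $\tfrac{\gamma}{\lambda^2}\tfrac{1}{|\Omega|}\int_\Omega P'(\phi)\,dV$ appearing inside $G$. I would need to check that when it is incorporated into $G_-$ versus $G_+$ according to the sign of $\partial_1 G$, the resulting $G_-$ is still a genuine (local) Nemytskii operator in $\phi$ — equivalently, that the splitting is done pointwise in $\psi$ under the integral defining $G_\pm$, so that the non-local contribution, whose $\phi$-derivative is the constant $\tfrac{\gamma}{\lambda^2 |\Omega|}\int_\Omega P''(\phi)\cdot(\,\cdot\,)$ — actually itself non-local — must be handled. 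The cleanest route is to treat the non-local term explicitly (evaluate it at $\phi^n$ or lag it one iteration), absorbing it into the $G_+$-type part; then $G_-$ depends only on the local double-well and reaction contributions and the Lipschitz/monotonicity estimates above go through verbatim, with $M_G$ the local Lipschitz constant of $G_-$. I would state this explicitly and verify that discrete conservation from Section \ref{sec:discreteAC} is unaffected, since that argument only used that $P'(\phi^\ell)$ is evaluated consistently in the two places it appears.
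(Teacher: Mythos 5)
Your overall skeleton matches the paper's: an error recursion obtained by subtracting two instances of \eqref{eq:ACLscheme}, the mean value theorem applied to the monotone part $G_-$, testing with the new error, and Banach's fixed point theorem. (The paper works with $e^{j+1}=\phi^{n+1,j+1}-\phi^{n+1}$, the error to the solution of \eqref{eq:ACtimepm}, rather than with successive differences, but that is immaterial.) However, there is a genuine flaw in your final estimate. After writing $G_-(\phi^{n+1,j})-G_-(\phi^{n+1,j-1})=\theta\,e^j$ with $-M_G\le\theta\le 0$, you bound the two right-hand-side contributions \emph{separately}, obtaining the numerator $M_G+\mathcal L$ and hence
\begin{equation*}
q=\frac{M_G+\mathcal L}{\tfrac{1}{\Delta t}+\mathcal L}.
\end{equation*}
This satisfies $q<1$ if and only if $M_G<1/\Delta t$, i.e.\ only under the time-step restriction $\Delta t<1/M_G$; for larger $\Delta t$ you get $q>1$ and no contraction. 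Your parenthetical claim that $q\le 2\mathcal L/(1/\Delta t+\mathcal L)<1$ is likewise false unless $\mathcal L<1/\Delta t$. So as written your argument does not prove the theorem's assertion that convergence is independent of the time-step size. The fix is to combine the two terms \emph{before} estimating: the coefficient of $e^j$ is $\theta+\mathcal L$, and since $-M_G\le\theta\le 0$ and $\mathcal L\ge M_G$, one has $0\le\theta+\mathcal L\le\mathcal L$. This replaces $M_G+\mathcal L$ by $\mathcal L$ in the numerator and gives $q=\Delta t\mathcal L/(1+\Delta t\mathcal L)<1$ unconditionally (the paper's Young-inequality variant gives $\Delta t\mathcal L/(2+\Delta t\mathcal L)$). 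Also note that your intermediate remark about ``using $\theta\le 0$ to discard the favorable sign'' is vacuous here: the cross term $\int_\Omega\theta\,e^j e^{j+1}\,dV$ has no definite sign because $e^j$ and $e^{j+1}$ are different functions, so nothing can be discarded — the sign of $\theta$ only helps through the combination $\theta+\mathcal L$.

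Your closing observation about the non-local term is a fair and worthwhile point: the mean value theorem step implicitly treats $G_-$ as a pointwise function of $\phi$, whereas the non-local average is not. Your proposed remedy (lagging the non-local contribution so that it sits in the explicitly treated part, leaving $G_-$ local) is consistent with how the paper actually implements the splitting element-by-element in the discrete strategy, and does not affect the conservation argument of Section~\ref{sec:discreteAC}.
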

\begin{proof}
To show when the iterations (\ref{eq:ACLscheme}) converge, we follow similar steps as \cite[Lemma 4.1]{kumar2014convergence} and we show the main steps in the following. We first define the error $e^{j+1}:= \phi^{n+1,j+1}-\phi^{n+1}$. By subtracting (\ref{eq:ACtimepm}) from (\ref{eq:ACLscheme}), we get
\begin{equation}
e^{j+1}(1+\Delta t\mathcal L) - \gamma\Delta t\nabla^2e^{j+1} = \Delta t(G_-(\phi^{n+1,j},T,c)-G_-(\phi^{n+1},T,c)) + \Delta t\mathcal Le^j. \label{eq:ACLerror}
\end{equation}
By the mean value theorem we have that $G_-(\phi^{n+1,j},T,c)-G_-(\phi^{n+1},T,c)=\partial_1G_-(\xi,T,c)e^j$, for some number $\xi$ between $\phi^{n+1,j}$ and $\phi^{n+1}$. We define
\begin{equation}
M_G = \max_{0\leq\xi\leq 1}(-\partial_1G_-(\xi,T,c)).\label{eq:MF}
\end{equation}
Since $G_-(\phi,T,c)$ is monotonically decreasing with respect to $\phi$, $\partial_1G_-(\phi,T,c)$ is either negative or zero; hence, $M_G$ is a positive number. We multiply (\ref{eq:ACLerror}) with $e^{j+1}$ and integrate over the domain $\Omega$. Using our knowledge about $G_-(\phi,T,c)$, and letting $\mathcal L \geq M_G$, this can be written as
\begin{equation*}
\|e^{j+1}\|^2(1+\Delta t\mathcal L) + \gamma\Delta t\|\nabla e^{j+1}\|^2 \leq \Delta t \mathcal L\|e^{j}\|\|e^{j+1}\|,  
\end{equation*}
where we in the second term have used integration by parts and on the right-hand side applied the Cauchy-Schwartz inequality as well as that $0\leq \partial_1G_-(\xi,T,c)+\mathcal L\leq \mathcal L$. The norms are $L^2$-norms. The right-hand side can be further rewritten using Young's inequality, hence
\begin{equation*}
\|e^{j+1}\|^2(1+\Delta t\mathcal L) + \gamma\Delta t\|\nabla e^{j+1}\|^2 \leq \frac{1}{2} \Delta t \mathcal L(\|e^{j}\|^2+\|e^{j+1}\|^2).
\end{equation*}
By collecting all terms with $e^{j+1}$ on the left-hand side, we arrive at
\begin{equation*}
\|e^{j+1}\|^2 + \frac{2\gamma\Delta t}{2+\Delta t\mathcal L}\|\nabla e^{j+1}\|^2 \leq \frac{ \Delta t \mathcal L}{2+\Delta t\mathcal L}\|e^{j}\|^2.
\end{equation*}
From this we can conclude that the L-scheme (\ref{eq:ACLscheme}) is a contraction and hence converges by the Banach fixed point theorem when
\begin{equation}
 \mathcal L\geq M_G.\label{eq:Lschemebounds}
\end{equation}
\end{proof}

Note that (\ref{eq:Lschemebounds}) is not a strict limit, and convergence can also be achieved under milder constrictions on $\mathcal L$, but will then have a restriction on $\Delta t$. This has been exemplified for e.g.~the Richards equation \cite{list2016iterative}. However, under the above constraints convergence is guaranteed, for any starting point. Note that the factor
\begin{equation}\label{eq:Lschemerate}
	L= \frac{ \Delta t \mathcal L}{2+\Delta t\mathcal L}
\end{equation}
can estimate how fast the convergence is. A smaller value of $L$ (closer to 0) means that convergence is expected to be faster. Hence, when $\mathcal L$ and/or $\Delta t$ grows, convergence is expected to be slower as $L$ would approach 1. This is in contrast to non-linear equations where the non-linearity is occurring in the time derivative. In this case, the L-scheme iterations are expected to be faster for larger time-step sizes $\Delta t$ \cite[Remark 4]{illiano2021iterative}.

In practice, the L-scheme iterations are performed until
\begin{equation}
	\|\phi^{n+1,j+1}-\phi^{n+1,j}\|\leq \text{tol}_{\mathcal L}, \label{eq:Lschemetol}
\end{equation}
where $\text{tol}_{\mathcal L}$ is a prescribed tolerance value. The newest $\phi^{n+1,j+1}$ is taken as the solution $\phi^{n+1}$ and the simulation can proceed.

\subsubsection{Determining $M_G$ and $G_-(\phi)$}
A relevant question is of course how to determine $M_G$ and $G_-(\phi)$ to perform the L-scheme iterations. As only an upper bound for $M_G$ is needed, we start with this. We search for $M_G$ such that
\begin{equation*}
M_G = \max_{0\leq\phi\leq 1}|\partial_1G(\phi,T,c)|.
\end{equation*}
This way, $M_G$ is potentially overestimated as the positive values of $\partial_1G(\phi,T,c)$ are also included, but this $M_G$ will clearly also fulfill (\ref{eq:MF}). Differentiating $G(\phi,T,c)$ with respect to $\phi$ gives
\begin{equation*}
\partial_1G(\phi,T,c) = \frac{\gamma}{\lambda^2}16\Big(-(1-6\phi+6\phi^2) + \frac{1}{|\Omega|}\int_{\Omega}(1-6\phi+6\phi^2)dV\Big) - \frac{4}{\lambda}\frac{1}{m_m}f(T,c)(1-2\phi).
\end{equation*}
Since the average of 1 is 1, this can be further simplified to
\begin{equation}
\partial_1G(\phi,T,c) = \frac{96\gamma}{\lambda^2}\Big(\phi(1-\phi)-\frac{1}{|\Omega|}\int_{\Omega}\phi(1-\phi)dV\Big) - \frac{4}{\lambda}\frac{1}{m_m}f(T,c)(1-2\phi)\label{eq:Fderiv}
\end{equation}
We then take the maximum over $0\leq\phi\leq 1$:
\begin{align*}
M_G &= \max_{0\leq\phi\leq 1}|\partial_1G(\phi,T,c)|\\ &\leq \max_{0\leq\phi\leq 1} \frac{96\gamma}{\lambda^2}|\phi(1-\phi)-\frac{1}{|\Omega|}\int_{\Omega}\phi(1-\phi)dV| + \frac{4}{\lambda}\frac{1}{m_m}|f(T,c)|\\ &\leq \frac{24\gamma}{\lambda^2} + \frac{4}{\lambda}\frac{1}{m_m}|f(T,c)|.
\end{align*}
Hence, the value of $M_G$ can be seen as an interplay between two processes: One due to changes in $\phi$ coming from curvature-driven motion, and one coming from chemical reactions. The latter could be zero, but $M_G$ will always be larger than zero since $\gamma,\lambda>0$.

The equation (\ref{eq:Fderiv}) can also be used to determine $G_-(\phi,T,c)$. Since $G_-(\phi,T,c)$ simply consists of the part of $G(\phi,T,c)$ where $G(\phi,T,c)$ is decreasing with respect to $\phi$, the sign of $\partial_1G(\phi,T,c)$ in (\ref{eq:Fderiv}) gives us the increasing and decreasing parts. However, we have to make sure that a splitting of $G$ into $G_-$ and $G_+$ and corresponding implicit/explicit time stepping still result in a conservative scheme.

\subsubsection{Strategy for L-scheme iterations for the conservative Allen-Cahn equation}
As pointed out in Section \ref{sec:discreteAC}, the discrete version (\ref{eq:discAC}) of the conservative Allen-Cahn equation (\ref{eq:rephase}) is still conservative as long as the same time step $\ell$ are used for same $\phi_K^\ell$ in both $P'(\phi_K^\ell)$ and in $\sum_{J\in\mathcal E}|J|P'(\phi_J^\ell)$, although $\ell=n$ or $\ell=n+1$ could be chosen independently for each control volume. Hence, when splitting the non-linearities into an increasing and decreasing part, this has to be done for control volume by control volume. We outline this in practice:

At time step $n$, $\phi^n_K$ is known for all $K\in\mathcal E$, and we use the space-discrete version of (\ref{eq:Fderiv}), namely
\begin{align*}
\partial_1G(\phi_K;\{\phi\}_{J\in\mathcal E},T_K,c_K) = &\frac{96\gamma}{\lambda^2}\Big(\phi_K(1-\phi_K)-\frac{1}{|\Omega|}\sum_{J\in\mathcal E}|J|\phi_J(1-\phi_J)\Big)\\ &- \frac{4}{\lambda}\frac{1}{m_m}f(T_K,c_K)(1-2\phi_K),
\end{align*}
to determine whether $\phi_K$ is causing $G(\phi_K;\{\phi\}_{J\in\mathcal E},T_K,c_K)$ to be increasing or decreasing. Hence, depending on the sign, $\phi_K$ will either be solved explicitly or implicitly (that is, either part of $G_+$ or $G_-$), and is assigned to time step $n$ or $n+1$. We denote this as $\phi_K^{\ell_K}$ to denote that $\ell_K$ is $n$ or $n+1$ in an element-wise manner. When all control volumes are checked and assigned to being either $n$ or $n+1$ in the non-linearity, the discrete L-scheme iterations are:
\begin{align}\label{eq:ACdiscLscheme}
\frac{1}{\Delta t} (\phi^{n+1,j+1}_K-\phi^n_K) = &\frac{\gamma}{|K|}\sum_{L\in \mathcal N(K)}|\sigma_{K,L}|F^{\phi;n+1,j+1}_{K,L}+ G_-(\phi^{n+1,j}_K,T_K,c_K)\nonumber\\ &+G_+(\phi^n_K,T_K,c_K)- \mathcal L(\phi^{n+1,j+1}_K-\phi^{n+1,j}_K),
\end{align}
where $G_-(\phi_K^{n+1,j},T_K,c_K)$ is $G(\phi_K^{n+1,j};\{\phi^{\ell_J}\}_{J\in\mathcal E},T_K,c_K)$ when $\phi_K^n$ was assigned to time step $n+1$, and $G_+(\phi_K^n,T_K,c_K)$ is $G(\phi_K^n;\{\phi^{\ell_J}\}_{J\in\mathcal E},T_K,c_K)$ when $\phi_K^n$ was assigned to time step $n$. Note that the status of each element is updated at every iteration to ensure that the correct splitting of the non-linearity is performed. The discrete version of (\ref{eq:Fnonlin}) accounting for the splitting is
\begin{align*}
G(\phi_K^{\ell_K};\{\phi^{\ell_J}\}_{J\in\mathcal E},T_K,c_K) = &-\frac{\gamma}{\lambda^2}P'(\phi_K^{\ell_K}) - \frac{4}{\lambda}\phi_K^{\ell_K}(1-\phi_K^{\ell_K})\frac{f(T_K,c_K)}{m_m}\\ &+ \frac{\gamma}{\lambda^2}\frac{1}{|\Omega|}\sum_{J\in\mathcal E}|J|P'(\phi_J^{\ell_J})
\end{align*}
where $\ell_K$ (and $\ell_J$) is either $n$ or $n+1,j$. Note that also $G_+(\phi_K^n,T_K,c_K)$ is updated throughout the L-scheme iterations, since the sum relying on $\{\phi^{\ell_J}\}_{J\in\mathcal E}$ is updated. Choosing the stabilization parameter $\mathcal L$ according to $M_G$, the scheme (\ref{eq:ACdiscLscheme}) converges to a solution that conserves the phase field. Note however, that since we in practice stop the L-scheme iterations when the error is within a certain threshold, see (\ref{eq:Lschemetol}), we will therefore expect an error in terms of conserving the phase field within that same threshold.

\subsection{Coupling the phase-field equation to the other model equations}\label{sec:coupmod}
The conservative Allen-Cahn equation (\ref{eq:rephase}) is coupled to (\ref{eq:phase}) describing flow and transport of solute and heat. The equations are tightly coupled: the reaction rate in (\ref{eq:rephase}) depends on the solute concentration and temperature, while the phase field appears in all equations in (\ref{eq:phase}). We here consider an iterative scheme, where at every time step each equation (\ref{eq:rephase}) and (\ref{eq:phase}) is solved separately, and we iterate between the equations until convergence before continuing to the next time step. The benefit of such a scheme is that each equation can be solved using separate strategies, according to the properties of each equation. For simplicity we here consider a case without flow and that $m_f = 2m_m$. Hence, solute and heat is transported with diffusion/conduction only, but where the diffusion and conduction through the fluid and mineral depends on the value of the phase field.

The solution strategy is as following, and similar to the strategies applied in \cite{BringedalBastidas3,brun2020iterative}. We discretize in time using a time-step size $\Delta t$, and let $\phi^{n+1},c^{n+1},T^{n+1}$ denote the phase field, concentration and temperature at the next time step. The time-discrete equations, when splitting the non-linearity in an explicit/implicit manner as needed for the phase field and solving the other equations fully implicitly, are

\begin{subequations}\label{eq:coupsallimp}
	\begin{align}
		\frac{1}{\Delta t}(\phi^{n+1}-\phi^n) &= \gamma\nabla^2\phi^{n+1} +G_{-}(\phi^{n+1},T^{n+1},c^{n+1})\nonumber\\ &\quad + G_+(\phi^n,T^{n+1},c^{n+1}),\label{eq:coupphaseimp} \\
		\frac{1}{\Delta t}(\phi^{n+1}c^{n+1} &+(1-\phi^{n+1})m_m -\phi^nc^n-(1-\phi^n)m_m)\nonumber\\ &= D\nabla(\phi^{n+1}\nabla c^{n+1}) \label{eq:coupsoluteimp}\\
		\frac{1}{\Delta t} (c_\phi^{n+1}\rho_\phi^{n+1}T^{n+1}-c_\phi^n\rho_\phi^nT^n) &= \nabla\cdot(k_\phi^{n+1}\nabla T^{n+1}).\label{eq:coupheatimp}
	\end{align}
\end{subequations}
We will solve these equations in an iterative manner. We let $i$ be the iteration index for the coupling iterations. The logic initialization of the coupling iterations is to take $\phi^{n+1,0}=\phi^n$, $c^{n+1,0}=c^n$ and $T^{n+1,0}=T^n$, where $\phi^n,c^n,T^n$ are the (already found) solutions from the previous time step. However, the iterations converge for any initial guess, which will be shown in proof of Theorem \ref{thm:coupling}. We first find the phase field $\phi^{n+1,i+1}$ by solving
\begin{align}\label{eq:coupphase}
	\frac{1}{\Delta t}&(\phi^{n+1,i+1}-\phi^n) = \gamma\nabla^2\phi^{n+1,i+1} +G_{-}(\phi^{n+1,i+1},T^{n+1,i},c^{n+1,i}) \nonumber\\ &+ G_+(\phi^n,T^{n+1,i},c^{n+1,i}) - \mathcal L_\text{coup}(\phi^{n+1,i+1}-\phi^{n+1,i}),
\end{align}
where the last term has been added as a stabilization term. This term helps us to ensure that the iterations are indeed converging and is needed for the proof of Theorem \ref{thm:coupling}. Note that the reaction rate is using the iteration index from the previous iteration for temperature and solute concentration, which are known. Hence, $\phi^{n+1,i+1}$ is the only unknown. This equation is still non-linear and will be solved with L-scheme iterations. See Remark \ref{remark:Lscheme} on how the L-scheme iterations are modified due to the presence of the newly added stabilization term. After solving for $\phi^{n+1,i+1}$, we find $c^{n+1,i+1}$ and $T^{n+1,i+1}$ by solving
\begin{subequations}\label{eq:coupsoluteheat}
	\begin{align}
	\frac{1}{\Delta t}&(\phi^{n+1,i+1}c^{n+1,i+1}+(1-\phi^{n+1,i+1})m_m-\phi^nc^n-(1-\phi^n)m_m)\nonumber\\ &= D\nabla(\phi^{n+1,i+1}\nabla c^{n+1,i+1}) \label{eq:coupsolute}\\
	\frac{1}{\Delta t}&((c_p\rho)_\phi^{n+1,i+1}T^{n+1,i+1}-(c_p\rho)_\phi^nT^n) = \nabla\cdot(k_\phi^{n+1,i+1}\nabla T^{n+1,i+1}).\label{eq:coupheat}
	\end{align}
\end{subequations}
Recall that $(c_p\rho)_\phi,k_\phi$ depend on $\phi$ and are here using $\phi^{n+1,i+1}$. Since $\phi^{n+1,i+1}$ is known, these two equations can be solved for $c^{n+1,i+1},T^{n+1,i+1}$. Both these equations are linear, hence they can be solved directly when applying a spatial discretization. Overall the scheme is as illustrated in Figure \ref{fig:scheme}.

\begin{remark}\label{remark:Lscheme}
Note that the L-scheme iterations are slightly altered as the term $\mathcal L_\text{coup}(\phi^{n+1,i+1}-\phi^{n+1,i})$ now appears in (\ref{eq:coupphase}). In the resulting L-scheme iterations, we will at every time step $n+1$ and at every coupling iteration $i+1$, solve for $\phi^{n+1,i+1,j+1}$
\begin{align}\label{eq:ACLschemecoup}
	\frac{1}{\Delta t} &(\phi^{n+1,i+1,j+1}-\phi^n) = \gamma\nabla^2\phi^{n+1,i+1,j+1}\nonumber \\ &+ G_-(\phi^{n+1,i+1,j},T^{n+1,i},c^{n+1,i})+G_+(\phi^n,T^{n+1,i},c^{n+1,i})\nonumber \\ &- \mathcal L(\phi^{n+1,i+1,j+1}-\phi^{n+1,j}) - \mathcal L_\text{coup}(\phi^{n+1,i+1,j+1}-\phi^{n+1,i}),
\end{align}
where $G_-$ and $G_+$ are the same as before. Performing the same steps as in the proof of Theorem \ref{thm:Lscheme} we now obtain
\begin{equation*}
\|e^{j+1}\|^2 + \frac{2\gamma\Delta t}{2+\Delta t\mathcal L+\Delta t\mathcal L_\text{coup}}\|\nabla e^{j+1}\|^2 \leq \frac{ \Delta t \mathcal L}{2+\Delta t\mathcal L +\Delta t\mathcal L_\text{coup}}\|e^{j}\|^2.
\end{equation*}
Hence, we still obtain a contraction for any $\mathcal L_\text{coup}>0$ and assuming $\mathcal L\geq M_G$. In fact, the factor $L=\frac{ \Delta t \mathcal L}{2+\Delta t\mathcal L +\Delta t\mathcal L_\text{coup}}$ is now smaller compared to (\ref{eq:Lschemerate}) and we can expect slightly faster convergence with the presence of this new stabilization term. The L-scheme iterations are made until 
\begin{equation*}
	\|\phi^{n+1,i+1,j+1}-\phi^{n+1,i+1,j}\|\leq \text{tol}_{\mathcal L}, 
\end{equation*}
and the newest $\phi^{n+1,i+1,j+1}$ is taken as $\phi^{n+1,i+1}$; that is, the phase field of the current coupling iteration. 
\end{remark}

\begin{figure}[h!]
	\begin{center}
	\includegraphics[width=0.5\textwidth]{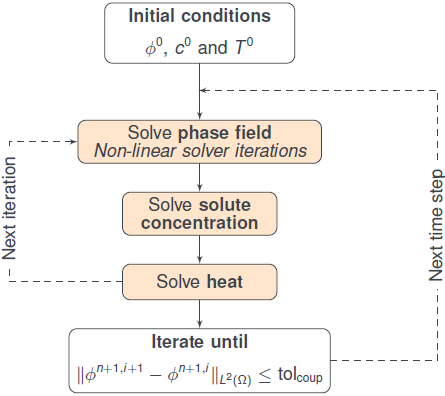}
	\caption{Scheme of coupling iterations}
	\label{fig:scheme}
	\end{center}
\end{figure}

To prove convergence of the coupling iterations, we rely on the following assumptions:
\begin{itemize}
	\item[(A1)] The phase field must be bounded away from 0 and 1.
	\item[(A2)] The solute concentration and temperature must be bounded.
	\item[(A3)] The gradient of the solute concentration and temperature must be bounded at every time step; that is, $\|\nabla c^n\|_{L^{\infty}(\Omega)}\leq C_c$ and $\|\nabla T^n\|_{L^{\infty}(\Omega)}\leq C_T$ for constants $C_c, C_T>0$ and for all time steps $n$.
\end{itemize}
The original Allen-Cahn equation can be shown to fulfill a maximum principle, also in the time-discrete form including stabilization terms, see \cite[Lemma 1]{BringedalBastidas3}. Including the non-local term does not alter this, hence also the conservative Allen-Cahn equation (\ref{eq:coupphase}) results in a phase field which is between 0 and 1. For a bounded domain and $\lambda>0$, the phase field will be bounded away from 0 and 1. Hence, (A1) is fulfilled. Additionally, \cite[Lemma 3]{BringedalBastidas3} proves the maximum principle for the solute concentration, using the same equation for the solute as considered here. The proof relies on the constraint 
\begin{equation}
4\gamma\leq \frac{\lambda k_0}{m_m}\label{eq:Manuelagamma}
\end{equation}
which we will obey as well when simulating the coupled model. The heat equation (\ref{eq:coupheat}) has the same structure as the solute equation (\ref{eq:coupsolute}), and hence can be shown to be bounded under same arguments as for the solute concentration. Hence, (A2) is fulfilled when the constraint is obeyed. Finally, (A3) is a reasonable assumption given the diffuse nature of the equations for solute concentration and temperature (\ref{eq:coupsoluteheat}). However, rigorous proofs of the assumptions (A1)-(A3) are beyond the scope of this manuscript. 

\begin{remark}
	Note that to obtain guarantees that the phase field is bounded away from 0 and 1, one can also regularize the phase-field equation with some value $\delta>0$, similar as has been done in \cite{BringedalBastidas3,bringedal2019phase}. In this case one rephrases the equation to using $\phi_\delta$, where $0<\delta\leq \phi_\delta\leq 1-\delta <1$. 
\end{remark}

\begin{theorem}\label{thm:coupling}
	Under the assumptions (A1)-(A3), the coupling iterations (\ref{eq:coupphase}), (\ref{eq:coupsoluteheat}) form a contraction and will hence converge for any initial guess when $\mathcal L_\text{coup}$ fulfills (\ref{eq:Lcouprestriction}) and the time-step size fulfills (\ref{eq:dtrestriction}).
\end{theorem}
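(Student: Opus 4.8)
\emph{Proof plan.} Fix the time step $n+1$ and view one coupling iteration as the map $F$ sending an admissible state $(\phi^{n+1,i},c^{n+1,i},T^{n+1,i})$ -- with $\phi^{n+1,i}\in[0,1]$ (guaranteed by the maximum principle and (A1)) and $c^{n+1,i},T^{n+1,i}$ bounded (by (A2)) -- to $(\phi^{n+1,i+1},c^{n+1,i+1},T^{n+1,i+1})$ by first solving the stabilized phase-field problem (\ref{eq:coupphase}), with its inner non-linear solve carried out by the L-scheme of Remark \ref{remark:Lscheme}, and then the linear solute and heat problems (\ref{eq:coupsoluteheat}). The plan is to show that $F$ is a contraction in a suitable $L^2$-type norm, so that Banach's fixed point theorem gives convergence, for any initial guess, to the unique solution of the fully implicit system (\ref{eq:coupsallimp}). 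Denote by $e_\phi,e_c,e_T$ the differences of the images under $F$ of two admissible states and by $d_\phi,d_c,d_T$ the differences of the states themselves (equivalently, two successive iterates). Subtracting the two corresponding instances of (\ref{eq:coupphase}), (\ref{eq:coupsolute}) and (\ref{eq:coupheat}) produces error equations, which I would test with $e_\phi$, $e_c$ and $e_T$ respectively and integrate over $\Omega$.

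For the phase-field step, testing with $e_\phi$ and integrating the diffusion term by parts puts $\|e_\phi\|^2/\Delta t+\gamma\|\nabla e_\phi\|^2+\mathcal L_\text{coup}\|e_\phi\|^2$ on the left. On the right the difference of the implicit parts $G_-$ splits, by the mean value theorem, into a $\phi$-contribution that is $\leq 0$ (since $G_-$ is nonincreasing and, by the maximum principle, the intermediate point lies in $[0,1]$) plus a contribution that is Lipschitz in $(T,c)$; the explicit part $G_+(\phi^n,\cdot,\cdot)$ contributes only a Lipschitz-in-$(T,c)$ difference. The nonlocal term of (\ref{eq:rephase}), being a spatial constant, contributes -- in the worst case where it is evaluated at the implicit iterate -- a term bounded by $(16\gamma/\lambda^2)\|e_\phi\|^2$ (using $\max_{[0,1]}|P''|=16$ together with $\|\cdot\|_{L^1(\Omega)}\leq|\Omega|^{1/2}\|\cdot\|_{L^2(\Omega)}$); this is absorbed into the left-hand side by requiring
\begin{equation}\label{eq:Lcouprestriction}
\mathcal L_\text{coup}\ \geq\ \frac{16\gamma}{\lambda^2}.
\end{equation}
Invoking (A2) to bound the derivatives of the reaction rate $f$ in $(T,c)$, together with Cauchy--Schwarz and Young's inequality, and discarding the gradient term before dividing by $\|e_\phi\|$, yields $\|e_\phi\|\leq\theta\,\|d_\phi\|+\eta\big(\|d_c\|+\|d_T\|\big)$, where, with $\Lambda:=1/\Delta t+\mathcal L_\text{coup}-16\gamma/\lambda^2$, one has $\theta=\mathcal L_\text{coup}/\Lambda$ and $\eta=C_f/\Lambda$, and $C_f$ is the Lipschitz constant of the reaction term in $(T,c)$.

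For the solute and heat steps the decisive point is that, once $\phi^{n+1,i+1}$ is fixed, equations (\ref{eq:coupsolute}) and (\ref{eq:coupheat}) are linear and mutually decoupled. The product rule gives $\phi^{n+1,i+1}c^{n+1,i+1}-\phi^{n+1}c^{n+1}=\phi^{n+1,i+1}e_c+e_\phi\,c^{n+1}$ and $\nabla(\phi^{n+1,i+1}\nabla c^{n+1,i+1})-\nabla(\phi^{n+1}\nabla c^{n+1})=\nabla(\phi^{n+1,i+1}\nabla e_c)+\nabla(e_\phi\nabla c^{n+1})$, and analogously for (\ref{eq:coupheat}) since $(c_p\rho)_\phi$ and $k_\phi$ are affine in $\phi$. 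Testing with $e_c$ (resp.\ $e_T$): assumption (A1) provides the strictly positive lower bound on $\phi^{n+1,i+1}$ -- and $(c_p\rho)_\phi,k_\phi$ are bounded below because both phases have positive heat capacity and conductivity -- which keeps the leading terms coercive; (A2) bounds $c^{n+1}$, $T^{n+1}$ and the affine coefficients; and (A3) bounds $\nabla c^{n+1}$ and $\nabla T^{n+1}$, so the remaining cross terms are absorbed by Young's inequality. This gives $\|e_c\|\leq C_c'\|e_\phi\|$ and $\|e_T\|\leq C_T'\|e_\phi\|$ with constants $C_c',C_T'$ depending only on the bounds in (A1)--(A3) and the fixed data, with at most a bounded dependence on $\Delta t$; recall that the maximum-principle bounds underlying (A2) require the constraint (\ref{eq:Manuelagamma}).

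Chaining the three estimates: an input whose $c,T$ are already consistent with its $\phi$ -- which holds for every image of $F$, hence from the first iteration on -- satisfies $\|d_c\|+\|d_T\|\leq(C_c'+C_T')\|d_\phi\|$, so $\|e_\phi\|\leq L_\text{coup}\|d_\phi\|$ with $L_\text{coup}=\theta+\eta(C_c'+C_T')=\big(\mathcal L_\text{coup}+C_f(C_c'+C_T')\big)/\Lambda$, and then also $\|e_c\|,\|e_T\|\leq\max\{C_c',C_T'\}\,L_\text{coup}\|d_\phi\|$. Requiring $L_\text{coup}<1$ is equivalent to $C_f(C_c'+C_T')+16\gamma/\lambda^2<1/\Delta t$, i.e.\ the time-step restriction
\begin{equation}\label{eq:dtrestriction}
\Delta t\ <\ \frac{1}{\frac{16\gamma}{\lambda^2}+C_f\,(C_c'+C_T')},
\end{equation}
which, under (\ref{eq:Lcouprestriction}), also secures $\Lambda>0$ and $\theta\in(0,1)$; hence $F$ is a contraction on the set of admissible and (after one step) consistent states, and Banach's theorem gives convergence for any initial guess. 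I expect the main obstacle to be exactly this cross-coupling closure: the solute and heat errors re-enter the phase-field step only after multiplication by the reaction Lipschitz constant $C_f$, so the contraction factor cannot be forced below $1$ by tuning $\mathcal L_\text{coup}$ alone and a genuine smallness condition on $\Delta t$ is unavoidable; a secondary technical nuisance is controlling the implicitly treated nonlocal term, which is precisely the purpose of (\ref{eq:Lcouprestriction}). Tracking the exact form of $C_f,C_c',C_T'$, and their mild bounded dependence on $\Delta t$, is the routine part left to fill in.
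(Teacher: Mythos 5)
Your proposal is correct and follows essentially the same route as the paper's proof: subtract the decoupled iteration from the fully implicit system, test the phase-field error equation with $e_\phi$, use (A1)--(A3) together with Young's inequality to bound the solute and temperature errors by the phase-field error, and chain the three estimates into a contraction that forces a lower bound on $\mathcal L_\text{coup}$ and an upper bound on $\Delta t$. The only substantive deviations are bookkeeping: you exploit the monotonicity of $G_-$ to drop its $\phi$-contribution and treat the non-local term by a separate worst-case bound, so your explicit constants in the two conditions differ from the paper's $\mathcal L_\text{coup}>\max\{0,10\mathcal M_G-2/\Delta t\}$ and $\Delta t<1/\bigl(\mathcal M_G(5+E_c/2+E_T/2)\bigr)$, but they play exactly the same role.
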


\begin{proof}
We define the error terms
\begin{equation*}
	e^{i+1}_\phi := \phi^{n+1,i+1}-\phi^{n+1},\quad e^{i+1}_c := c^{n+1,i+1}-c^{n+1},\quad e^{i+1}_T := T^{n+1,i+1}-T^{n+1}.
\end{equation*}
We consider first the Allen-Cahn equation. By subtracting (\ref{eq:coupphase}) from the corresponding time-discrete form without decoupling, equation (\ref{eq:coupphaseimp}), we obtain
\begin{align*}
	\frac{1}{\Delta t}(\phi^{n+1,i+1}-\phi^{n+1}) = &\gamma \nabla^2(\phi^{n+1,i+1}-\phi^{n+1}) + G_-(\phi^{n+1,i+1},T^{n+1,i},c^{n+1,i})\\ &+ G_+(\phi^n,T^{n+1,i},c^{n+1,i})- G_-(\phi^{n+1},T^{n+1},c^{n+1})\\ &- G_+(\phi^{n},T^{n+1},c^{n+1}) - \mathcal L_\text{coup}(\phi^{n+1,i+1}-\phi^{n+1,i}).
\end{align*}
We multiply the above equation with a $\psi\in H^1_0(\Omega)$ and integrate over the domain $\Omega$. We then obtain
\begin{align}
	\frac{1}{\Delta t}(e^{i+1}_\phi,\psi) = &-\gamma (\nabla e^{i+1}_\phi,\nabla\psi ) + (G_-(\phi^{n+1,i+1},T^{n+1,i},c^{n+1,i}),\psi)\nonumber\\ &+ (G_+(\phi^n,T^{n+1,i},c^{n+1,i}),\psi)- (G_-(\phi^{n+1},T^{n+1},c^{n+1}),\psi)\nonumber\\ &- (G_+(\phi^{n},T^{n+1},c^{n+1}),\psi) -\mathcal L_\text{coup}(e^{i+1}_\phi-e^{i}_\phi,\psi),
\end{align}
where the notation $(\cdot,\cdot)$ refers to the usual $L^2$-inner product. We then let $\psi = e^{i+1}_\phi$, which gives us
\begin{align*}
	\|e^{i+1}_\phi\|^2 &+\Delta t\gamma \| \nabla e^{i+1}_\phi\|^2 + \Delta t\mathcal L_\text{coup}\|e^{i+1}_\phi\|^2 = \Delta t\mathcal L_\text{coup}(e^{i}_\phi,e^{i+1}_\phi)\\ &+ \Delta t(G_-(\phi^{n+1,i+1},T^{n+1,i},c^{n+1,i}),e^{i+1}_\phi)+ \Delta t(G_+(\phi^n,T^{n+1,i},c^{n+1,i}),e^{i+1}_\phi) \\ &- \Delta t(G_-(\phi^{n+1},T^{n+1},c^{n+1}),e^{i+1}_\phi) - \Delta t(G_+(\phi^{n},T^{n+1},c^{n+1}),e^{i+1}_\phi),
\end{align*}
We let $\mathcal M_G = \max(|\partial_1 G|,|\partial_2G|,|\partial_3G|)$, where we have maximized the partial derivatives of $G$ over the bounded domain where $\phi,T,c$ are defined (Assumptions (A1) and (A2)). Then, by the mean-value theorem, we get
\begin{align*}
(1+\Delta t\mathcal L_\text{coup})\|e^{i+1}_\phi\|^2 &+\Delta t\gamma \| \nabla e^{i+1}_\phi\|^2 \leq \Delta t\mathcal L_\text{coup}(e^{i}_\phi,e^{i+1}_\phi) \\ &  + \Delta t\mathcal M_G\|e^{i+1}_\phi\|^2 + 2\Delta t\mathcal M_G(e^i_T,e^{i+1}_\phi) + 2\Delta t \mathcal M_G (e^i_c,e^{i+1}_\phi).
\end{align*}
We apply Young's inequality to the first and the last two terms on the right-hand side, which, for $\delta_1,\delta_2,\delta_3>0$, gives
\begin{align*}
(1&+\Delta t\mathcal L_\text{coup} -\Delta t \mathcal M_G)\|e^{i+1}_\phi\|^2 +\Delta t\gamma \| \nabla e^{i+1}_\phi\|^2\\ \leq &\Delta t \mathcal L_\text{coup} \frac{\delta_1}{2}\|e^i_\phi\|^2 + \Delta t\mathcal L_\text{coup}\frac{1}{2\delta_1}\|e^{i+1}_\phi\|^2 +  \Delta t\mathcal M_G \delta_2\|e^i_T\|^2 + \Delta t\mathcal M_G\frac{1}{\delta_2}\|e^{i+1}_\phi\|^2 \\ &+  \Delta t\mathcal M_G \delta_3\|e^i_c\|^2 + \Delta t\mathcal M_G\frac{1}{\delta_3}\|e^{i+1}_\phi\|^2.
\end{align*}
We collect all terms with $\|e^{i+1}_\phi\|$ on the left-hand side and hence have
\begin{align}
	&(1+\Delta t\mathcal L_\text{coup} -\Delta t\mathcal M_G - \Delta t\mathcal L_\text{coup}\frac{1}{2\delta_1} -\Delta t\mathcal M_G \frac{1}{\delta_2}-\Delta t\mathcal M_G \frac{1}{\delta_3})\|e^{i+1}_\phi\|^2 \nonumber \\  &+\Delta t\gamma \| \nabla e^{i+1}_\phi\|^2 \leq \Delta t \mathcal L_\text{coup} \frac{\delta_1}{2}\|e^i_\phi\|^2  +  \Delta t\mathcal M_G \delta_2\|e^i_T\|^2  +  \Delta t\mathcal M_G \delta_3\|e^i_c\|^2. \label{eq:tempproof}
\end{align}
To proceed, we need bounds for $\|e^i_T\|$ and $\|e^i_c\|$, hence we turn to the temperature and solute concentration equation. 

For the solute concentration, we start by subtracting (\ref{eq:coupsolute}) from (\ref{eq:coupsoluteimp}), multiply with a test function from $H^1_0(\Omega)$ and integrate over the domain. Letting $e^{i+1}_c$ be the test function, we have
\begin{align*}
	\frac{1}{\Delta t}&(\phi^{n+1,i+1}e^{i+1}_c,e^{i+1}_c) + D(\phi^{n+1,i+1}\nabla c^{n+1,i+1},\nabla e^{i+1}_c) \\ &= D(\phi^{n+1}\nabla c^{n+1},\nabla e^{i+1}_c) + \frac{1}{\Delta t}(e^{i+1}_\phi(m_m-c^{n+1}),e^{i+1}_c).
\end{align*}
We use (A1); that $\phi_m\leq\phi$ for some $\phi_m>0$. Then the above equation can be rewritten to
\begin{align*}
	\frac{\phi_m}{\Delta t}&\|e^{i+1}_c\|^2 + D\phi_m\|\nabla e^{i+1}_c\|^2 \\ &\leq D((\phi^{n+1}-\phi^{n+1,i+1})\nabla c^{n+1},\nabla e^{i+1}_c) + \frac{1}{\Delta t}(e^{i+1}_\phi(m-c^{n+1}),e^{i+1}_c).
\end{align*}
We apply Young's inequality to the two terms on the right-hand side, which, for $\delta_4,\delta_5>0$, gives
\begin{align*}
\frac{\phi_m}{\Delta t}\|e^{i+1}_c\|^2 + D\phi_m\|\nabla e^{i+1}_c\|^2 \leq &D\frac{\delta_4}{2}\| e^{i+1}_\phi \nabla c^{n+1}\|^2 + D\frac{1}{2\delta_4}\|\nabla e^{i+1}_c\|^2\\ &+ \frac{1}{\Delta t}\frac{\delta_5}{2}\|(m-c^{n+1})e^{i+1}_\phi\|^2 + \frac{1}{\Delta t}\frac{1}{2\delta_5}\|e^{i+1}_c\|^2.
\end{align*}
We then apply (A2) and (A3), which results in
\begin{align*}
	(\frac{\phi_m}{\Delta t}-\frac{1}{\Delta t}\frac{1}{2\delta_5})\|e^{i+1}_c\|^2 + (D\phi_m-\frac{D}{2\delta_4})\|\nabla e^{i+1}_c\|^2 \leq  (D\frac{\delta_4}{2}C_c^2 +\frac{1}{\Delta t}\frac{\delta_5}{2}m_m^2)\|e^{i+1}_\phi\|^2
\end{align*}
To ensure positive terms on the left-hand side, we let $\delta_4 = \delta_5=\frac{1}{\phi_m}$. We then arrive at
\begin{equation*}
	\|e^{i+1}_c\|^2 \leq \frac{2\Delta t}{\phi_m} (\frac{DC_c^2}{2\phi_m}+\frac{ m_m^2}{2\phi_m\Delta t})\| e^{i+1}_\phi\|^2.
\end{equation*}
Note that this bound is also valid for index $i$. For simplicity, we rephrase the result with the more compact notation
\begin{equation}\label{eq:cboundphi}
	\| e^i_c\|^2 \leq E_c\| e^i_\phi\|^2,
\end{equation}
for $E_c= \frac{2\Delta t}{\phi_m} (\frac{DC_c^2}{2\phi_m}+\frac{ m_m^2}{2\phi_m\Delta t})$.

We perform similar steps for the temperature equation. We start by subtracting (\ref{eq:coupheat}) from (\ref{eq:coupheatimp}), multiply with a test function from $H^1_0(\Omega)$ and integrate over the domain. We let $e^{i+1}_T$ be the test function and hence obtain
\begin{align*}
	\frac{c_{p,f}\rho_f}{\Delta t}&(\phi^{n+1,i+1}e^{i+1}_T,e^{i+1}_T) + \frac{c_{p,m}\rho_m}{\Delta t}((1-\phi^{n+1,i+1})e^{i+1}_T,e^{i+1}_T) \\ &+ ((\phi^{n+1,i+1}k_f+(1-\phi^{n+1,i+1})k_m)\nabla T^{n+1,i+1},\nabla e^{i+1}_T) \\ = &((\phi^{n+1}k_f+(1-\phi^{n+1})k_m)\nabla T^{n+1},\nabla e^{i+1}_T)\\ &+ \frac{1}{\Delta t}(e^{i+1}_\phi(c_{p,m}\rho_m-c_{p,f}\rho_f)T^{n+1},e^{i+1}_T)
\end{align*}
We use (A1); that $\phi_m\leq\phi\leq \phi_M$ for $\phi_M<1$. Then the above equation can be rewritten to
\begin{align*}
	&\frac{c_{p,f}\rho_f\phi_m+c_{p,m}\rho_m(1-\phi_M)}{\Delta t}\|e^{i+1}_T\|^2 + (k_f\phi_m + k_m(1-\phi_M))\|\nabla e^{i+1}_T\|^2\\ &\leq ((k_f(\phi^{n+1}-\phi^{n+1,i+1})+k_m(\phi^{n+1,i+1}-\phi^{n+1}))\nabla T^{n+1},\nabla e^{i+1}_T)\\ &+ \frac{1}{\Delta t}(e^{i+1}_\phi(c_{p,m}\rho_m-c_{p,f}\rho_f)T^{n+1},e^{i+1}_T).
\end{align*}
We apply Young's inequality to the two terms on the right-hand side, which, for $\delta_6,\delta_7>0$, gives
\begin{align*}
		&\frac{c_{p,f}\rho_f\phi_m+c_{p,m}\rho_m(1-\phi_M)}{\Delta t}\|e^{i+1}_T\|^2 + (k_f\phi_m + k_m(1-\phi_M))\|\nabla e^{i+1}_T\|^2\\ &\leq \frac{\delta_6}{2}\| (-k_fe^{i+1}_\phi +k_me^{i+1}_\phi)\nabla T^{n+1}\|^2 + \frac{1}{2\delta_6}\|\nabla e^{i+1}_T\|^2\\ &+ \frac{1}{\Delta t}\frac{\delta_7}{2}\|(c_{p,m}\rho_m-c_{p,f}\rho_f)T^{n+1}e^{i+1}_\phi\|^2 + \frac{1}{\Delta t}\frac{1}{2\delta_7}\|e^{i+1}_T\|^2.
\end{align*}
We then apply (A2) and (A3), with $T_M$ being the upper bound of the temperature, which results in
\begin{align*}
	\frac{1}{\Delta t}&(c_{p,f}\rho_f\phi_m+c_{p,m}\rho_m(1-\phi_M)-\frac{1}{2\delta_7})\| e^{i+1}_T\|^2\\ &+ (k_f\phi_m+(1-\phi_M)k_m - \frac{1}{2\delta_6})\|\nabla e^{i+1}_T\|^2\\ &\leq (\frac{\delta_6}{2}|k_m-k_f|^2C_T^2 + \frac{1}{\Delta t}\frac{\delta_7}{2}|c_{p,m}\rho_m-c_{p,f}\rho_f|^2T_M^2) \|e^{i+1}_\phi\|^2
\end{align*}

To ensure positive terms on the left-hand side, we let $\delta_6 = \frac{1}{k_f\phi_m+k_m(1-\phi_M)}$ and $\delta_7=\frac{1}{c_{p,f}\rho_f\phi_m+c_{p,m}\rho_m(1-\phi_M)}$. We then arrive at
\begin{align*}
	&\frac{c_{p,f}\rho_f\phi_m+c_{p,m}\rho_m(1-\phi_M)}{2\Delta t}\|e^{i+1}_T\|^2\\ &\leq (\frac{\delta_6}{2}|k_m-k_f|^2C_T^2 + \frac{1}{\Delta t}\frac{\delta_7}{2}|c_{p,m}\rho_m-c_{p,f}\rho_f|^2T_M^2) \|e^{i+1}_\phi\|^2,
\end{align*}
where we, to keep the notation more compact, did not insert the chosen values for $\delta_6,\delta_7$ on the right-hand side. This bound is also valid for index $i$. For simplicity, we rephrase the result with the more compact notation
\begin{equation}\label{eq:Tboundphi}
	\| e^i_T\|^2 \leq E_T\| e^i_\phi\|^2,
\end{equation}
for $E_T = \frac{2\Delta t}{c_{p,f}\rho_f\phi_m+c_{p,m}\rho_m(1-\phi_M)}(\frac{\delta_6}{2}|k_m-k_f|^2C_T^2 + \frac{1}{\Delta t}\frac{\delta_7}{2}|c_{p,m}\rho_m-c_{p,f}\rho_f|^2T_M^2)$.

We then return to (\ref{eq:tempproof}) and insert here (\ref{eq:cboundphi}) and (\ref{eq:Tboundphi}). At the same time we let $\delta_1=1$ and $\delta_2=\delta_3=\frac{1}{2}$. Hence,
\begin{align}
	(1+&\frac{\Delta t}{2}\mathcal L_\text{coup} - 5\Delta t\mathcal M_G)\|e^{i+1}_\phi\|^2\nonumber\\ &\leq (\frac{\Delta t}{2}\mathcal L_\text{coup}+ \frac{\Delta t}{2}\mathcal M_G E_T + \frac{\Delta t}{2}\mathcal M_GE_c)\|e^{i}_\phi\|^2. \label{eq:contraction}
\end{align}
That means the coupling iterations form a contraction when the left-hand side is positive and when
\begin{equation*}
1+\frac{\Delta t}{2}\mathcal L_\text{coup} - 5\Delta t\mathcal M_G > \frac{\Delta t}{2}\mathcal L_\text{coup}+ \frac{\Delta t}{2}\mathcal M_G E_T + \frac{\Delta t}{2}\mathcal M_GE_c.
\end{equation*}
The latter translates to a restriction on the time-step size, and results in
\begin{equation}\label{eq:dtrestriction}
	\Delta t<\frac{1}{\mathcal M_G(5+E_c/2+E_T/2)}.
\end{equation}
To ensure that the left-hand side of (\ref{eq:contraction}) is positive, we need $\mathcal L_\text{coup}>10\mathcal M_G - \frac{2}{\Delta t}$, as well as $\mathcal L_\text{coup}>0$. Hence,
\begin{equation}\label{eq:Lcouprestriction}
	\mathcal L_\text{coup}>\max\{0,10\mathcal M_G - \frac{2}{\Delta t}\}.
\end{equation}
\end{proof}

Note that the time-step size restriction (\ref{eq:dtrestriction}) does not depend on the discretization mesh. However, it does depend on parameters that cannot easily be estimated. One possibility would be to use a coarse mesh and by trial-and-error find a time-step size that ensures convergence, which is then used on a finer mesh. However, the phase-field equation (\ref{eq:coupphase}) needs a certain mesh size to resolve the diffuse interface. A coarser mesh can be applied if the diffuse-interface width $\lambda$ is chosen larger, but changing the value of $\lambda$ can influence the time-step size restriction. Hence, one would for these model equations need to find the suitable time-step size on the mesh that is to be applied.

In practice, the coupling iterations are performed until
\begin{equation}
	\|\phi^{n+1,i+1}-\phi^{n+1,i}\|\leq \text{tol}_\text{coup},
\end{equation}
where $\text{tol}_\text{coup}$ is a prescribed tolerance. The newest $\phi^{n+1,i+1},c^{n+1,i+1},T^{n+1,i+1}$ is taken as $\phi^{n+1},c^{n+1},T^{n+1}$ and we can proceed to the next time step. As seen in the proof of Theorem \ref{thm:coupling}, the errors of the solute concentration and temperature are expected to be bounded when the error in the phase field is bounded. Hence, it is sufficient to consider a tolerance based on the phase field only. 
	
\begin{remark}
	Note that the value of $E_T$ in equation (\ref{eq:Tboundphi}) is zero if $k_m=k_f$ and $c_{p,f}\rho_f=c_{p,m}\rho_m$. This would correspond to equal heat conduction and heat storage properties in the fluid and mineral, which implies that the temperature equation is in practice independent of the phase field. A similar result can not be obtained for the solute.
\end{remark}

\section{Numerical results of the phase-field models}\label{sec:numbehavior}
We here perform numerical experiments to assess the numerical behavior of the conservative Allen-Cahn equation, in particular when it is solved with the L-scheme, and to test the full, coupled model to show its numerical behavior and potential. We investigate behavior of the Allen-Cahn equation only in Section \ref{sec:numAC}, before addressing the behavior of the coupled model in Section \ref{sec:numcoup}.

\subsection{Conservative and non-conservative Allen-Cahn}\label{sec:numAC}
We here consider the simplest test case possible, solving the original phase-field equation (\ref{eq:AC}) or the conservative version (\ref{eq:rephase}). The reaction rate will initially be set to zero, so only curvature-driven motion can change the shape (and, potentially, size) of the mineral. We consider two test cases, both to be considered in a two-dimensional domain $\Omega=(0,1)^2$, but with different initial conditions:
\begin{itemize}
	\item[(a)] We initialize the phase field as a circular mineral with an approximated diffuse interface transition at radius 0.3.
	\item[(b)] We initialize the phase field with ones everywhere except for in a centered square with side lengths 0.5 where the phase field is set to zero.
\end{itemize}  
The latter case (b) is more challenging as the initial condition violates the phase field equation and is more bound to numerical error. We solve the two test cases in three different ways:
\begin{itemize}
	\item[(i)] Solving (\ref{eq:AC}) fully implicitly, using Newton's method to solve the non-linear system of equations.
	\item[(ii)] Solving (\ref{eq:rephase}) fully implicitly, using Newton's method to solve the non-linear system of equations.
	\item[(iii)] Solving (\ref{eq:rephase}) semi-implcitly, using L-scheme to solve the non-linear system of equations as described in section \ref{sec:Lscheme}.
\end{itemize}
All approaches are solved using the Finite Volume discretization described in Section \ref{sec:FV}, on a uniform, rectangular grid. The test cases apply homogeneous Neumann conditions on the boundaries. For test case (a), only minor adjustments of the phase field diffuse interface is expected for the conservative approaches (ii) and (iii), as well as numerical error from the solvers. In test case (b), the curvature-driven motion of (ii) and (iii) will transform the initial square to a circular mineral. For the non-conservative phase field (i), we expect the mineral to shrink in both case (a) and (b).

All cases are solved on a $100\times100$ grid, with $\lambda=0.05$ and $\gamma=1$. Note that a smaller value of $\gamma$ causes the curvature-driven motion to be slower, but is here chosen large to highlight the issues with (\ref{eq:AC}). All equations are time stepped until $t=1$. The two fully-implicit cases (i) and (ii) are solved using $\Delta t = 10^{-4}$, which was needed to ensure convergence of the Newton iterations, and this time-step size is also applied for the L-scheme. 
With the current setup, we have $M_G=9.68\times 10^4$ and we choose $\mathcal L$ equal to this value. 
Newton iterations and L-scheme iterations are solved until a threshold of $10^{-13}$ is met in the $L^2$-norm between two subsequent iterations (that is, $\text{tol}_{\mathcal L}=10^{-13}$ in (\ref{eq:Lschemetol})). Since the size of the discrete system is not too large, the linear system is solved directly.

Table \ref{tab:deltavol} shows the change in volume of the mineral from initial condition until $t=1$ for the two test cases for all three approaches. Since the phase field approaches the value 0 in the mineral and 1 outside, the volume is calculated through
\begin{equation*}
\text{volume} = \int_{\Omega}(1-\phi)dV.
\end{equation*}

\begin{table}[htbp]
	{\footnotesize
		\caption{Volume change of mineral from first to last time step}  \label{tab:deltavol}
		\begin{center}
			\begin{tabular}{|l|c|c|c| } \hline
				Case/Approach & (i) & (ii) & (iii) \\ \hline
				(a)& -0.2850 & $2.0134\times10^{-11}$ & $1.5294\times 10^{-11}$  \\
				(b) & -0.25 & $-6.1989\times10^{-12}$ & $1.4864\times 10^{-11}$ \\ \hline
			\end{tabular}
		\end{center}
	}
\end{table}

Since all mineral disappeared for approach (i), the volume change corresponds to the loss of the initial amount of mineral. Note that there is a volume change due to the choice of the threshold tolerance. The two conservative approaches (ii) and (iii) show only change in volume that can be connected to the accuracy of the non-linear iterations. For approaches (ii) and (iii), at every time step a small volume change of about $1.5\times 10^{-15}$ is seen, which is the same as the tolerance of the Newton and L-scheme iterations.

Table \ref{tab:numit} shows average number of Newton or L-scheme iterations needed for each time step. Note however that in general a larger amount of iterations are needed in the beginning of the simulation and then stabilizes at a lower value. At least one Newton or L-scheme iteration is forced to be performed at every time step.

\begin{table}[htbp]
	{\footnotesize
		\caption{Average number of iterations for entire simulation}  \label{tab:numit}
		\begin{center}
			\begin{tabular}{|l|c|c|c| } \hline
				Case/Approach & (i) & (ii) & (iii) \\ \hline
				(a)& 1.03 & 1.0038 & 5.18  \\
				(b) & 1.03 & 1.0160 & 5.77\\ \hline
			\end{tabular}
		\end{center}
	}
\end{table}

For the approach (i), the entire mineral dissolves during the first 300 time steps and hence attains a constant solution with fluid only afterwards. 
When using the conservative scheme with Newton iterations; that is, approach (ii), 2-3 iterations are needed per time step in the beginning, which settles after 150 time steps. The Jacobian in the Newton iterations is however full due to the non-local term, which makes each Newton iteration expensive to solve.
For approach (iii), several L-scheme iterations are needed in the first phase while the diffuse interface (and for the square, also the shape) is adjusting. In the beginning 30-40 iterations are needed, before gradually decreasing to 3 L-scheme iterations after 1000 time steps, where it remains stable. However, the iterations are cheap as only a sparse linear system of equations needs to be solved. When solved on the same laptop and same software, approach (iii) was roughly a factor 2 faster than approach (ii).

Although the L-scheme iterations converge, they converge slowly. The size of $\mathcal L$ is responsible for this. Although we chose it according to the theoretical bounds (\ref{eq:Lschemebounds}), earlier studies of other equations have shown (see e.g. \cite{illiano2021iterative,list2016iterative,mitra2019lscheme}) that much lower values of $\mathcal L$ are possible, and will generally give faster convergence. Also larger values of $\Delta t$ could be used, but is for our equation expected to result in slower convergence for the L-scheme. Also note that lower values of $\mathcal L$ would usually lead to a time-step size restriction to obtain convergence. Hence, in the following two subsection we investigate numerically possible values of $\mathcal L$ and $\Delta t$.

\subsubsection{Size of $\mathcal L$}\label{sec:varyL}
We change the setup from the previous test slightly to not only consider semi-steady-state solutions. Initially we have a circular mineral of radius 0.3 as in (a), but this time with a reaction rate $f = -0.1$. Hence the mineral gradually dissolves, but is still present at $t=1$. We use $\gamma=0.1$ to have a more typical value for the curvature-driven motion. The value of $M_G$ is hence also changed and is $M_G=968$ now. 
We will for now fix $\Delta t=10^{-3}$ and vary $\mathcal L$. Table \ref{tab:numitL} shows the average number of iterations in each time step as a function of $\mathcal L$. Note that the numbers are generally larger than in Table \ref{tab:numit} as no steady-state is reached during the simulation. In this setup, the number of L-scheme iterations per time step is almost constant throughout the simulation.
We see how significantly lower values than the theoretical bound of $\mathcal L$ still provide convergence, and that the amount of iterations needed decreases when $\mathcal L$ decreases, before eventually increasing. Hence, there appears to be an optimal $\mathcal L$ for when the lowest amount of needed L-scheme iterations is obtained. For other equations, it has been shown that a lower value of $\mathcal L$ provides faster convergence (i.e., less iterations needed), and that there is an optimal $\mathcal L$ where the least amount of iterations are needed \cite{list2016iterative}, and $\mathcal L$ can be chosen differently from time step to time step to optimize convergence \cite{mitra2019lscheme}. There is also a lower limit for $\mathcal L$ which can still ensure convergence if the time-step size is also large \cite{list2016iterative}.

\begin{table}[htbp]
	{\footnotesize
		\caption{Average number of L-scheme iterations per time step for different values of $\mathcal L$}  \label{tab:numitL}
		\begin{center}
			\begin{tabular}{|l|c|c|c|c| } \hline
				Value of $\mathcal L$ & $M_G$ & $M_G/2$ & $M_G/4$ & $M_G/8$ \\ \hline
				Average number of iterations & 32 & 21 & 17 & 24  \\ \hline
			\end{tabular}
		\end{center}
	}
\end{table}

\subsubsection{Size of $\Delta t$}
We now use either the theoretical $\mathcal L=M_G$ or the $\mathcal L=M_G/4$ as this gave the lowest value of iterations, and vary the time-step size $\Delta t$. The setup is otherwise the same as in Section \ref{sec:varyL}. Table \ref{tab:numitdt} and \ref{tab:numitdt4} show the average number of iterations as a function of the time-step size for these two choices of $\mathcal L$. As expected for our equation, the number of iterations needed at every time step increases with the time-step size $\Delta t$. One case is deemed to not converge as the threshold value was not reached within 200 L-scheme iterations per time step. Note that convergence potentially could be achieved by allowing more iterations. Note that for $\mathcal L=M_G$, the number of L-scheme iterations increases but is less than doubled as the time-step size is doubled. Hence, in this case there is a computational gain by increasing the time-step size as the total number of L-scheme iterations throughout the simulation decreases. This is however not necessarily the case for $\mathcal L=M_G/4$ as the average number of L-scheme iterations is more than doubled as the time-step size increases from $2\times 10^{-3}$ to $4\times 10^{-3}$.

\begin{table}[htbp]
	{\footnotesize
		\caption{Average number of L-scheme iterations per time step for different values of $\Delta t$, when $\mathcal L = M_G$}  \label{tab:numitdt}
		\begin{center}
			\begin{tabular}{|l|c|c|c|c| } \hline
				Value of $\Delta t$ & $10^{-3}$ & $2\times10^{-3}$ & $4\times10^{-3}$ & $8\times10^{-3}$ \\ \hline
				Average number of iterations & 32 & 53 & 95 & 175  \\ \hline
			\end{tabular}
		\end{center}
	}
\end{table}

\begin{table}[htbp]
	{\footnotesize
		\caption{Average number of L-scheme iterations per time step for different values of $\Delta t$, when $\mathcal L = M_G/4$}  \label{tab:numitdt4}
		\begin{center}
			\begin{tabular}{|l|c|c|c|c| } \hline
				Value of $\Delta t$ & $10^{-3}$ & $2\times10^{-3}$ & $4\times10^{-3}$ & $8\times10^{-3}$ \\ \hline
				Average number of iterations & 17 & 30 & 83 & -  \\ \hline
			\end{tabular}
		\end{center}
	}
\end{table}

\subsection{Behavior of coupled model}\label{sec:numcoup}
We here consider the coupled model formulated in Section \ref{sec:coupmod}. As example case for the simulations, we consider a channel $\Omega=(0,1)^2$, where a mineral layer of initial thickness $0.25$ fills the lower part of the domain. Temperature is initially 1 everywhere and the fluid has a solute concentration of $0.5$, which corresponds to chemical equilibrium of the chosen reaction rate. A Dirichlet boundary condition of $T=0.9$ and $c=0.25$ is applied to the left boundary, while all other boundaries have zero Neumann boundary conditions. The phase field has zero Neumann boundary conditions on all boundaries. The lowering of the solute concentration (and temperature) at the left boundary is expected to trigger dissolution of the mineral and hence an increase in regions where the phase field approaches one. The chosen reaction rate is a simpler version of (\ref{eq:fpfd}) with constant solubility product. Hence,
\begin{equation*}
f(T,c) = f_p(T,c)-f_d(T,c) = k_0e^{-\frac{E}{RT}}\Big(\frac{c^2}{c_\text{eq}^2}-1\Big).
\end{equation*}
We here consider for simplicity a constant equilibrium concentration $c_\text{eq}=0.5$, but one could consider also temperature-dependent solubility \cite{plummer1988computer}.  
All parameters appearing in the model equations are as specified in Table \ref{tab:parameters}. We have not attempted to choose physically correct parameters, but generally chosen values around 1 to mimic a non-dimensional setup. As we need to fulfill (\ref{eq:Manuelagamma}), we have adjusted $\gamma$. For the heat conductivities we choose a larger heat conductivity in the mineral to be able to observe some heterogeneous influence. The simulation setup is sketched in Figure \ref{fig:simsetup}.

\begin{table}[htbp]
	{\footnotesize
	\caption{Parameter choices} \label{tab:parameters}
	\begin{center}
		\begin{tabular}{|l|c|}\hline
			Parameter & Value \\ \hline
			$\lambda$ & 0.05 \\ 
			$\gamma$ & 0.01  \\ \hline
			$E/R$ & 1 \\ 
			$k_0$ & 1 \\
			$c_\text{eq}$ & 0.5 \\ \hline
			$m_m$ & 1 \\ 
			$D$ & 1 \\ \hline 
			$\rho_f$ & 1 \\ 
			$c_{p,f}$ & 1 \\
			$\rho_m$ & 1 \\ 
			$c_{p,f}$ & 1 \\ 
			$k_f$ & 1 \\ 
			$k_m$ & 2 \\ \hline
		\end{tabular}
	\end{center}
	}
\end{table}

\begin{figure}[h!]
	\begin{center}
		\begin{overpic}[width=0.7\textwidth]{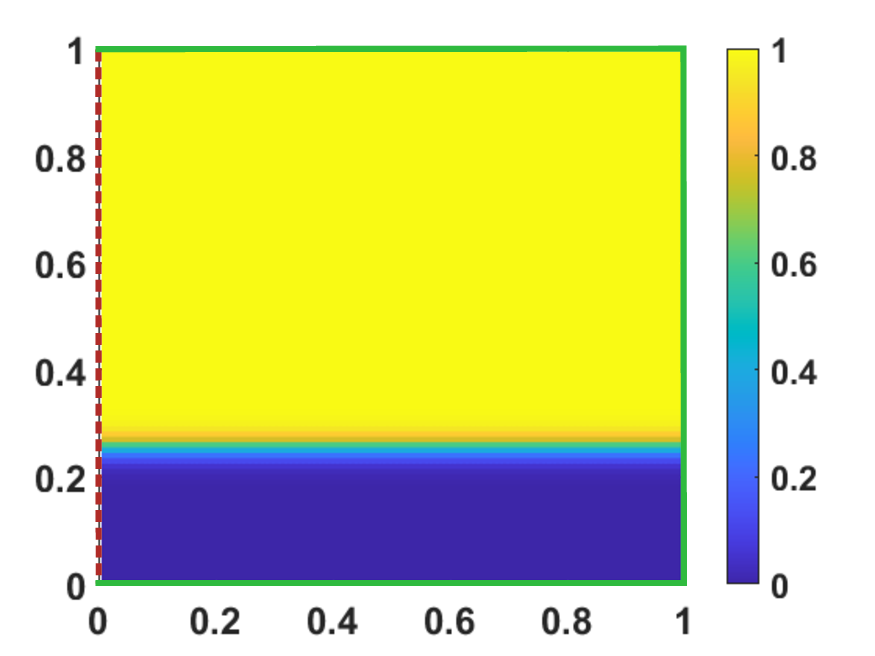}
			\put(38,42){$T^0=1$}
			\put(38,37){$c^0=0.5$}
			\put(13,24){\rotatebox{90}{$T=0.9, c = 0.25$}}
			\put(20,71){$\partial_n\phi=0,\partial_nT=0,\partial_nc=0$}
		\end{overpic}
	\caption{Initial phase field $\phi^0$ (background color), with initial temperature and solute concentration in text. Left boundary (dashed line) has Dirichlet boundary conditions for temperature and solute, while all other boundaries (solid line) have homogeneous Neumann boundary conditions. The phase field has homogeneous Neumann boundary conditions on the left boundary.}
	\label{fig:simsetup}
	\end{center}
\end{figure}

\subsubsection{Size of time-step size, tolerances and stabilization parameters}
We here apply a slightly larger tolerance for the L-scheme iterations than in Section \ref{sec:numAC}, but keep it lower than the tolerance for the coupling iterations to ensure that errors from the non-linear solver scheme does not influence the coupling iterations error. We therefore use $\text{tol}_{\mathcal L}=10^{-8}$ and $\text{tol}_\text{coup}=10^{-6}$ throughout all simulations presented in the following. Lower tolerances generally lead to more iterations.

With our parameter choices, we have $M_G = 118$, and we choose $\mathcal L=M_G$ in all simulations. For the coupling parameter $\mathcal L_\text{coup}$ we choose it small as this provide faster convergence for the coupling iterations. This result coincides with similar investigations (e.g.~\cite{BringedalBastidas3,brun2020iterative}). We here present results with $\mathcal L_\text{coup}=10^{-4}$. Although we may not fulfill (\ref{eq:Lcouprestriction}), numerical testing show that the coupling iterations converge for any $\mathcal L_\text{coup}>0$, and even with $\mathcal L_\text{coup}=0$.

By testing various time-step sizes, we found time-step sizes even up to $10^{-2}$ to provide converging coupling iterations. However, such long time-step sizes also resulted in very slowly converging L-scheme iterations, with generally more than 200 L-scheme iterations per coupling iteration needed. Hence, in the following, only results from shorter time-step sizes are presented. Although we cannot guarantee that (\ref{eq:dtrestriction}) (and (\ref{eq:Lcouprestriction})) is fulfilled, the coupling iterations were always found to converge fast for the investigated parameters.

\subsubsection{Behavior of coupling iterations}
For $\Delta t=10^{-3}$, $\mathcal L_\text{coup}=10^{-4}$ and $\mathcal L=M_G = 118$, we need an average of 2.21 coupling iterations per time step. More iterations are needed initially as the changes in solute concentration and temperature, and hence in phase field, are larger due to the boundary condition on the left boundary. For the L-scheme iterations, we need 6.00, 3.22 and 2.08 iterations in the first, second and third coupling iterations, respectively. See Figure \ref{fig:nint} for the evolution of iterations over time. Fewer L-scheme iterations are needed in later coupling iterations as we use the solution found in the previous coupling iteration when starting the L-scheme in later coupling iterations, giving a starting point that is already closer to the converged solution. Compared to Section \ref{sec:numAC}, fewer L-scheme iterations are needed for same time-step size as the tolerance for the L-scheme iterations is now larger.

\begin{figure}[h!]
	\begin{center}
	\includegraphics[width=0.45\textwidth]{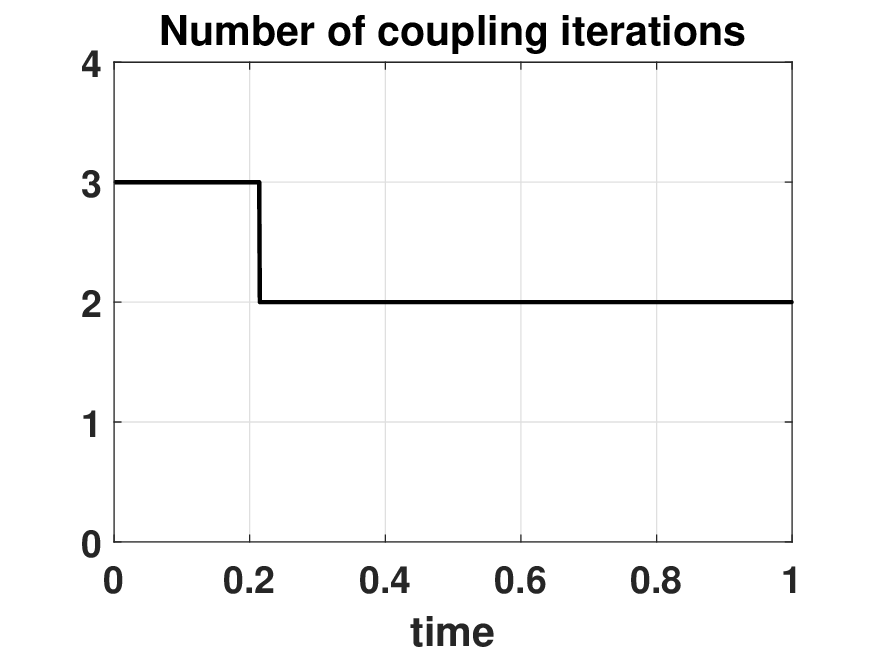}
	\includegraphics[width=0.45\textwidth]{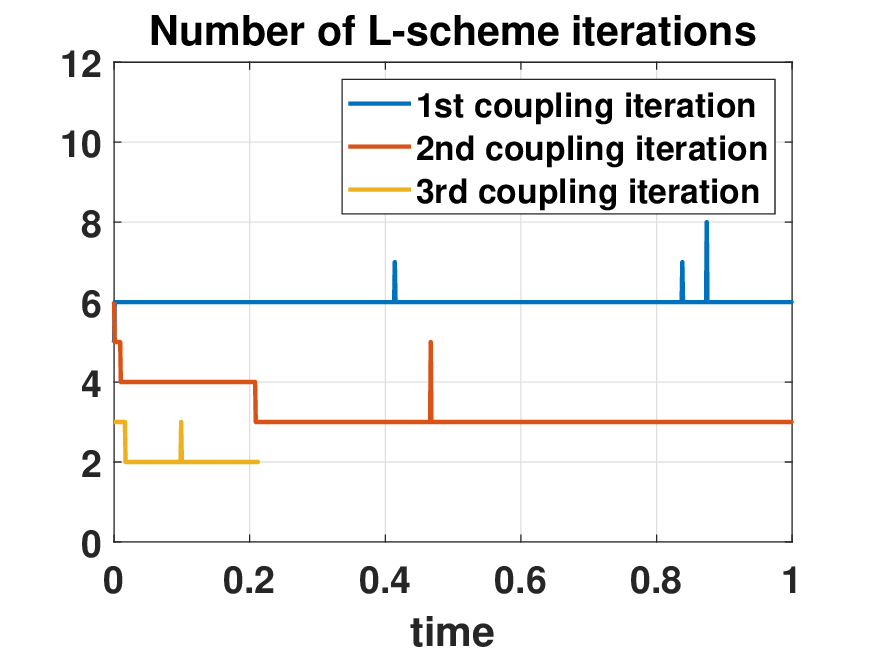}
	\caption{Number of coupling iterations (left) and number of L-scheme iterations per coupling iteration (right) over time for $\Delta t=10^{-3}$, $\mathcal L_\text{coup}=10^{-4}$ and $\mathcal L=M_G$.}
	\label{fig:nint}
\end{center}
\end{figure}

When increasing the time-step size, the number of coupling iterations increases slightly, see Table \ref{tab:numcoup}. However, the number of L-scheme iterations that are needed per coupling iterations increases significantly. For $\Delta t=8\times 10^{-3}$, more than 200 L-scheme iterations per coupling iteration are needed. Hence, we did not proceed with larger time-step sizes.

\begin{table}[htbp]
	{\footnotesize
		\caption{Average number of coupling iterations per time step for different values of $\Delta t$}  \label{tab:numcoup}
		\begin{center}
			\begin{tabular}{|l|c|c|c|c| } \hline
				Value of $\Delta t$ & $10^{-3}$ & $2\times10^{-3}$ & $4\times10^{-3}$ & $8\times10^{-3}$ \\ \hline
				Average number of coupling iterations & 2.21 & 2.64 & 3.11 & 3.58  \\ \hline
			\end{tabular}
		\end{center}
	}
\end{table}

\subsubsection{Simulation results of coupled model}
The simulation results for phase field, temperature and solute concentration after 0.5 and 1 time unit are shown in Figure \ref{fig:resultsvaryT}. The results are for the solution found using $\Delta t=10^{-3}$, but the solutions are qualitatively the same for the various time-step sizes that have been investigated. As expected, the mineral layer dissolves, which corresponds to a smaller region where the phase field attains values close to 0. As we used different heat conductivities in the fluid and mineral, vertical variability in the temperature profile can be seen in the transition between fluid and mineral. At $t=1$, the temperature is close to the boundary condition $T=0.9$ in the entire domain, showing that the heat conduction has propagated throughout the domain. For the solute we visualize the product $\phi c$ to highlight the solute concentration in the fluid. The solute concentration shows a gradient near the mineral interface as more solute is released into the fluid by the mineral dissolution. Hence, the overall solute concentration in the domain approaches the boundary value $c=0.25$ rather slowly due to ions being released by the chemical reaction.

\begin{figure}[h!]
		\begin{center}
	\includegraphics[width=0.32\textwidth]{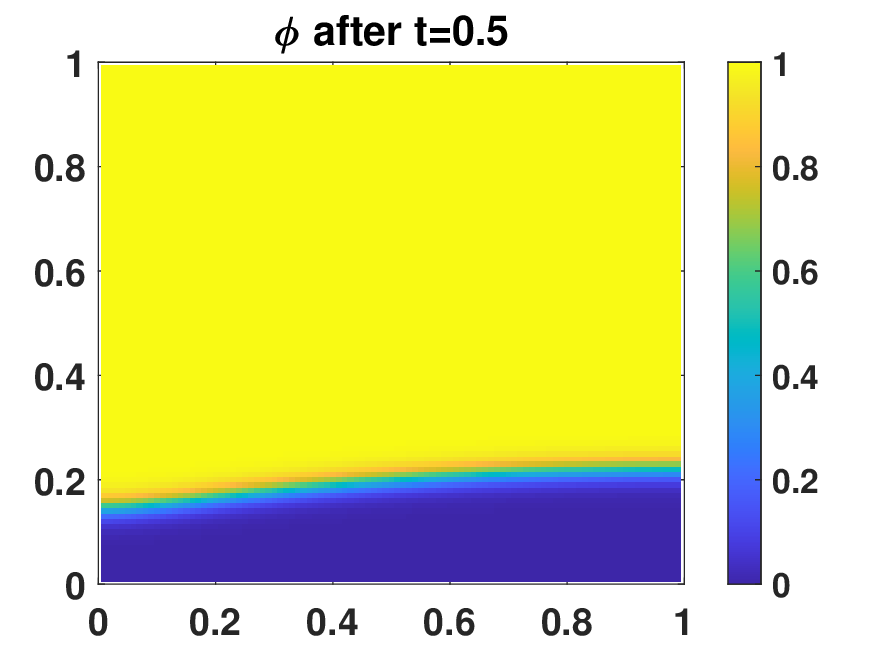}
	\includegraphics[width=0.32\textwidth]{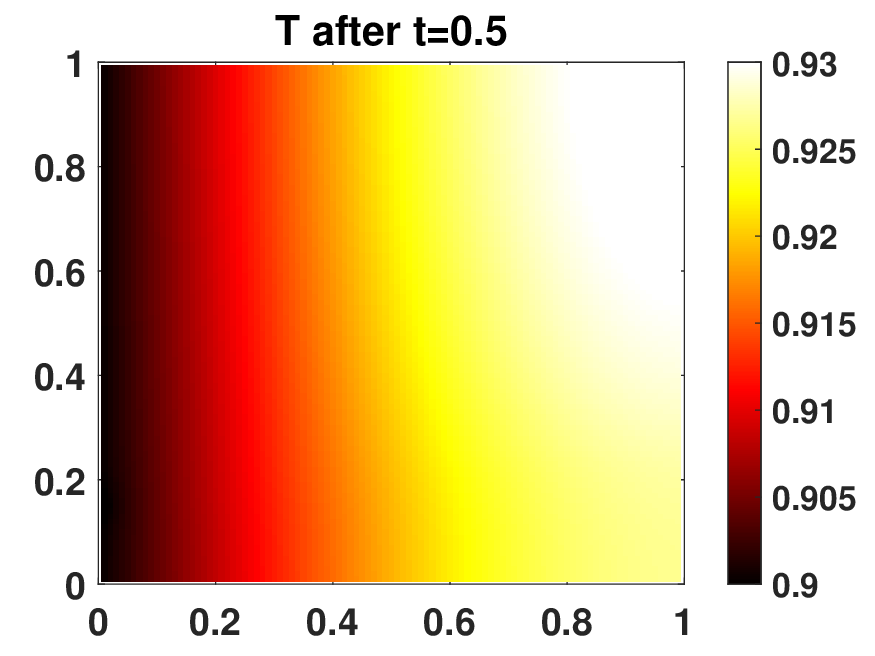}
	\includegraphics[width=0.32\textwidth]{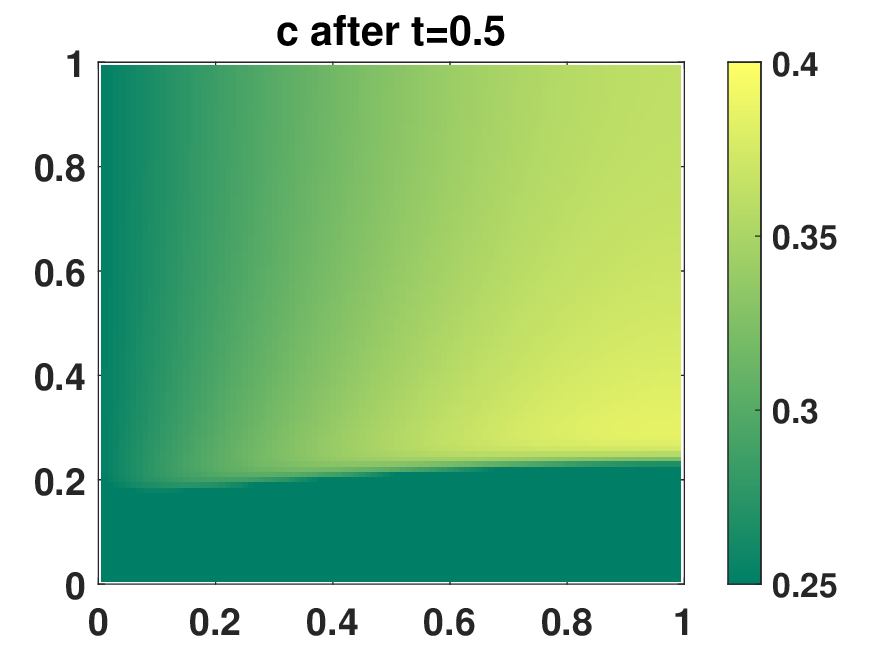}
	\includegraphics[width=0.32\textwidth]{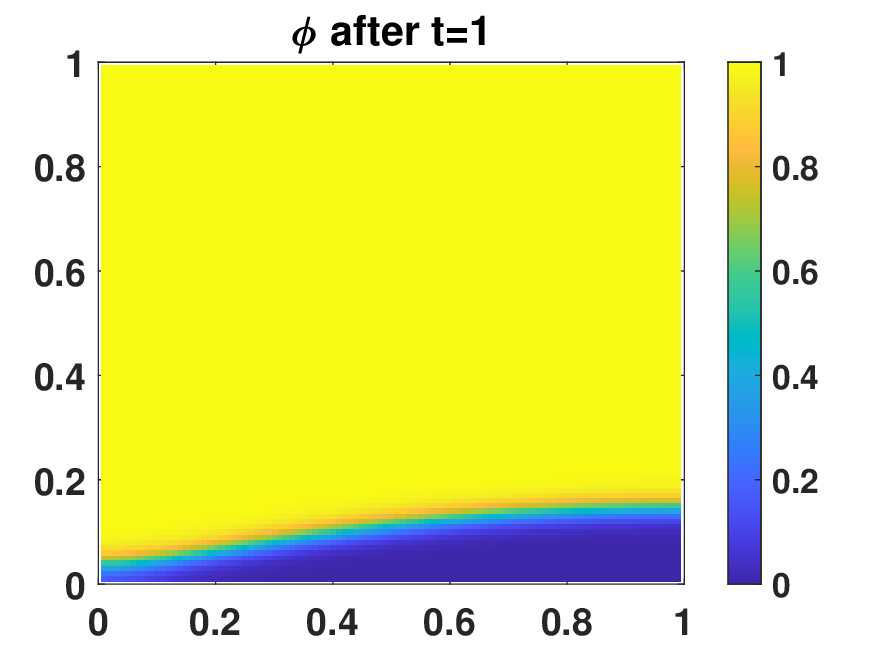}
	\includegraphics[width=0.32\textwidth]{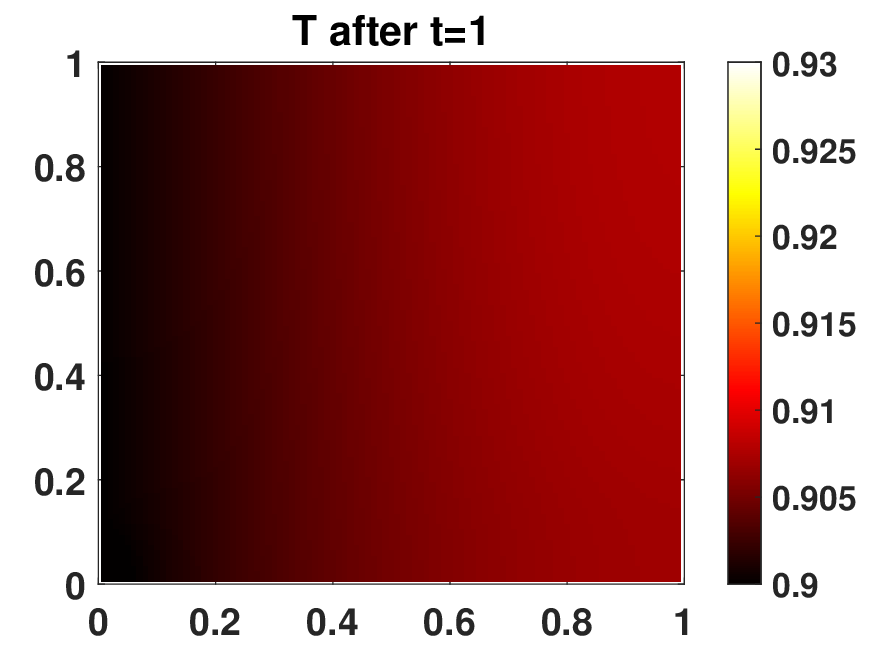}
	\includegraphics[width=0.32\textwidth]{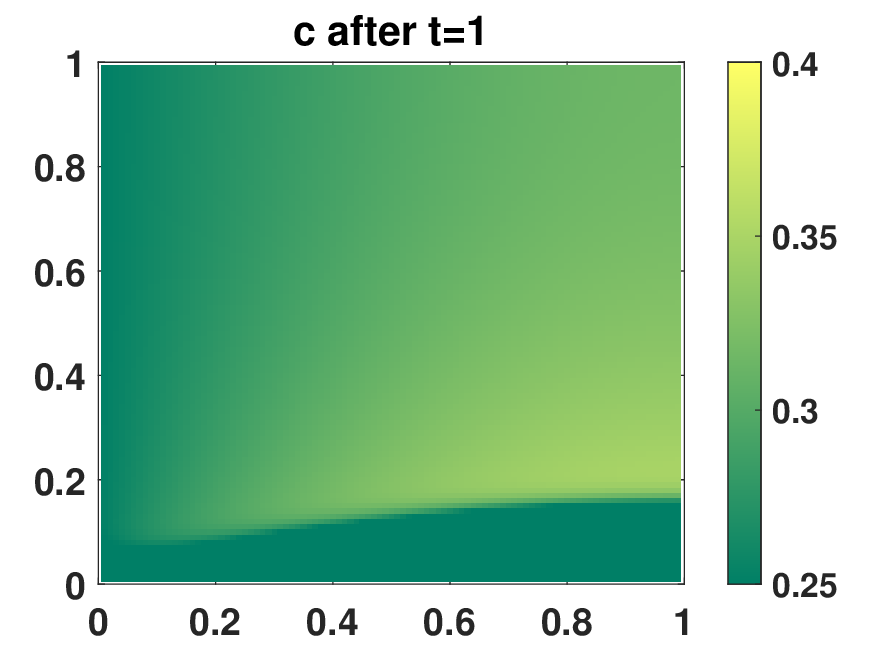}
	\caption{Phase field (left), temperature (middle) and solute concentration $\phi c$ (right) after $t=0.5$ (top) and $t=1$ (bottom).}
	\label{fig:resultsvaryT}
\end{center}
\end{figure}

We also calculate how the changes in the total amount of phase field, which corresponds to the amount of fluid, correspond to the reaction rate at every time step. That is, we calculate
\begin{equation*}
\phi_\text{int}^n=	\int_\Omega \phi^n dV\quad \Rightarrow \Delta\phi_\text{int}^n =\phi_\text{int}^{n+1}-\phi_\text{int}^n
\end{equation*}
for the changes in phase field, and
\begin{equation*}
	R^n = \Delta t\int_{\Omega}-\frac{4}{\lambda}\phi^n(1-\phi^n)\frac{1}{m_m}f(T^n,c^n)dV
\end{equation*}
to estimate the size of the reaction term at every time step. The resulting curves can be found in Figure \ref{fig:volumechange}. As observed here, the two lines are almost perfectly on top of each other, with a difference in the order of $10^{-7}$, which can be explained by the chosen tolerances for the coupling iterations and the L-scheme iterations. From Figure \ref{fig:volumechange} we also observe the size of the total reaction rate. Although temperature decreases, which corresponds to lower reaction rates and hence smaller change of $\phi$, the change of $\phi$ increases throughout the simulation as the decrease in solute concentration dominates.

\begin{figure}[h!]
		\begin{center}
	\includegraphics[width=0.45\textwidth]{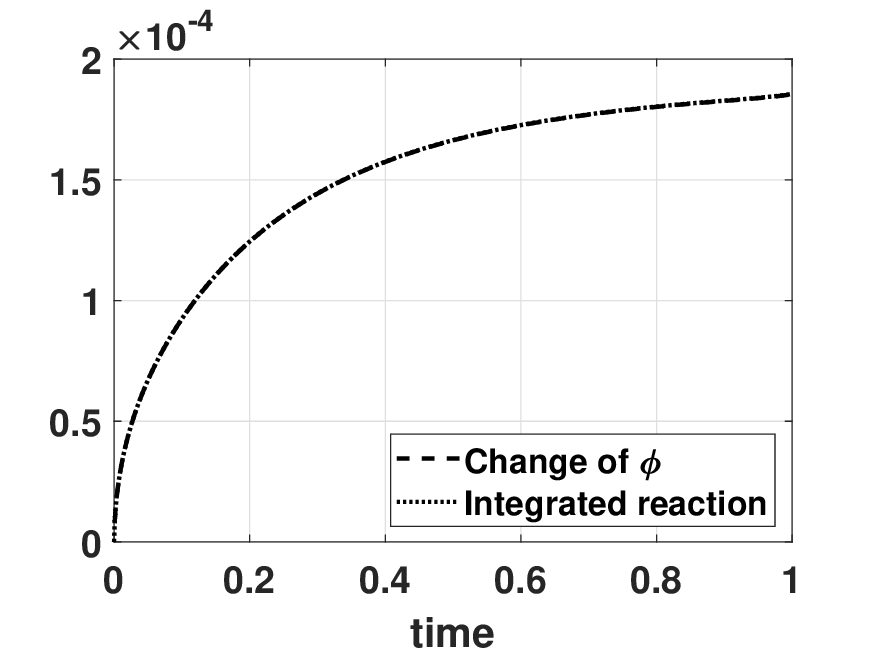}
	\includegraphics[width=0.45\textwidth]{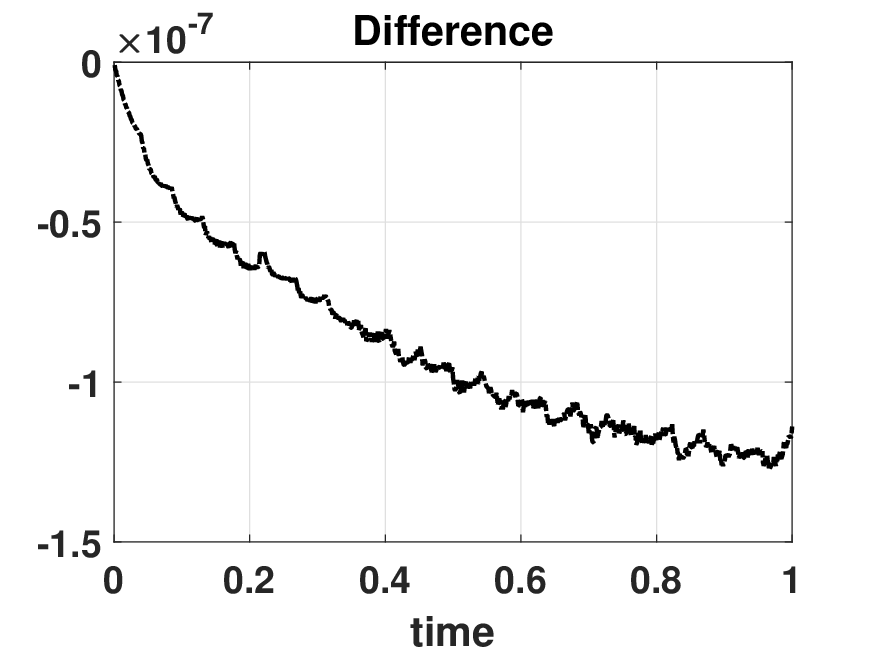}
	\caption{Changes in amount of phase field $\phi_\text{int}^n$ and integrated reaction rate $R^n$ (left), and the difference between these two curves (right) over time. The two lines in the left plot are almost perfectly on top of each other and cannot be visually separated.}
	\label{fig:volumechange}
\end{center}
\end{figure}

\section{Discussion and conclusion}\label{sec:conclusion}
In this paper we have formulated a phase-field model for reactive transport, where fluid flow, solute transport and heat transport is coupled with mineral precipitation and dissolution. The phase-field model is formulated rather general and can allow expansion/contraction effects due to the chemical reactions and is hence quasi-compressible. By taking the sharp-interface limit of the phase-field model, the model reduces to the expected sharp-interface model and recovers conservation of mass and energy in the fluid domain, conservation of energy in the mineral domain, as well as the expected exchanges of mass and energy across the evolving fluid-mineral interface. However, when using the original Allen-Cahn equation, the fluid-mineral interface itself evolves due to curvature-driven motion and would lead to non-physical loss or gain of mineral. We therefore consider a reformulated Allen-Cahn equation which is conservative. In this case the total phase field in the considered domain can only change due to chemical reactions in the domain, but encounters a conservative redistribution towards constant curvature of mineral. Mass and energy is also conserved.

The conservative Allen-Cahn equation includes a non-local term. Although easy to numerically discretize, the non-local term leads to a full Jacobian matrix if using Newton's method to solve the resulting non-linear system of equations, which makes Newton iterations expensive. We instead applied L-scheme iterations to solve the resulting non-linear system of equations. Although L-scheme iterations only offer linear convergence, they can be proven to always converge independently of time-step size and initial guess, when choosing the introduced stabilization parameter $\mathcal L$ appropriately. We could also show that the L-scheme iterations will converge to a solution that is discretely conservative. We note that another alternative to obtain a conservative phase-field model would be to use the Cahn-Hilliard equation instead of the conservative Allen-Cahn equation. 

To couple the conservative Allen-Cahn equation to the other model equations, we use an iterative scheme where the various model equations are solved sequentially. This enables us to solve each equation independently of the others and to optimize the solver scheme for each equation. We here consider a simplified setup without flow, hence only solute diffusion and heat conduction is present next to the phase-field evolution and chemical reactions. In this case, we can prove the convergence of the coupling iterations when adding a stabilization term $\mathcal L_\text{coup}$ to the phase-field equation, resulting in a restriction on the time-step size and on the stabilization term. 
For the future it would be beneficial to also include the flow in the coupling iterations. Incorporating the adapted Navier-Stokes equation in the proof of the convergence of the scheme is problematic, due to the occurring non-linearities and lack of reasonable bounds on the velocity. An alternative would be to use an adapted Stokes equation for the flow, which would be linear in velocity and offer the needed bounds. Using Stokes instead of Navier-Stokes is reasonable as long as the Reynolds number is small enough, which for applications in porous media is generally fulfilled.

The resulting scheme hence relies on good choices for the stabilization parameters $\mathcal L$ and $\mathcal L_\text{coup}$ as well as the time-step size $\Delta t$. Although the theoretical bound for $\mathcal L$ can easily be estimated, the coupling parameter $\mathcal L_\text{coup}$ and time-step size restriction of $\Delta t$ cannot easily be calculated. By numerical experiments we could show that convergence of the L-scheme iterations can be achieved for lower values of $\mathcal L$ than the estimated bound, which is a well-known behavior for L-scheme iterations. However, convergence of the L-scheme was significantly slower when increasing the time-step size, which is opposite what is the case when the L-scheme is applied to e.g.~the Richards equation (see \cite{illiano2021iterative}), where the non-linearity appears in the time derivative. It is important to emphasize that each L-scheme iteration is computationally cheap. It could be possible to optimize the L-scheme convergence by adjusting the value of $\mathcal L$ from time step to time step \cite{mitra2019lscheme}. The coupling iterations appeared to be insensitive with respect to the choice of $\mathcal L_\text{coup}$ and only slightly more iterations were needed when increasing the time-step size. Hence, the coupling iterations were robust and the resulting time-step restriction was not a limiting factor for the coupling iterations. 

In terms of geological applications like geothermal energy extraction, the formulated model and numerical scheme are suitable for modeling and simulation flow, transport and chemical reactions at the pore scale of the porous medium. Larger domains are usually modeled at the so-called Darcy scale, where only averaged quantities like porosity and averaged flow through permeability are used. The current phase-field model can be upscaled to Darcy scale by applying homogenization \cite{hornung1997homogenization}, similar as has been done for other phase-field models \cite{bringedal2019phase,redeker2016phase}. The advantage of upscaling phase-field model in contrast to sharp-interface models (as in \cite{noorden2009irc,bringedal2016upscaling}) is that constant domains are considered, which simplifies the upscaling steps. This however opens the question concerning asymptotic consistency; if the sharp-interface limit of the upscaled model is the same as the upscaled sharp-interface model, which has been established for a Cahn-Hilliard model \cite{lars2022upscaling}. To ensure a conservative Allen-Cahn model, the conservative Allen-Cahn equation could be applied to each so-called cell problem, which localizes the non-local term to smaller patches of the pore-scale domain. This opens for also efficient and conservative phase-field modeling using Allen-Cahn also in this case.

\section*{Acknowledgements}
We thank the Deutsche Forschungsgemeinschaft (DFG, German Research Foundation) for supporting this work by funding SFB 1313, Project Number 327154368.

\bibliographystyle{plain}
\bibliography{preprint_RAM}
\end{document}